  \theoremstyle{plain}
    \newtheorem{thm}{Theorem}[section]
    \newtheorem{prop}[thm]{Proposition}
    \newtheorem{corollary}[thm]{Corollary}
    \newtheorem{subsec}[thm]{}
\theoremstyle{definition}
    \newtheorem{defn}[thm]{Definition}
    \newtheorem{exam}[thm]{Example}
\theoremstyle{remark}
     \newtheorem{remark}[thm]{Remark}
\title{}
\author{}
\date{}
\title{Reduction of Nambu-Poisson manifolds by regular distributions}
\author{Apurba Das \\ Stat-Math Unit,
Indian Statistical Institute, Kolkata 700108,
West Bengal, India.\\
Email: apurbadas348@gmail.com}
\begin{document}

\maketitle
\begin{abstract}
The version of Marsden-Ratiu reduction theorem for Nambu-Poisson manifolds by a regular distribution has been studied by Ib$\acute{\text{a}}\tilde{\text{n}}$ez
et al. In this paper we show that the reduction is always ensured unless the distribution is zero. Next we extend the more general Falceto-Zambon Poisson reduction theorem for Nambu-Poisson manifolds. Finally, we define gauge 
transformations of Nambu-Poisson structures and show that these transformations commute with the reduction procedure.\\
\end{abstract}

\noindent
{2010 MSC: 17B63, 53C15, 53D17.}\\
{ Keywords: Reduction, Nambu-Poisson manifolds, gauge transformations.}
\thispagestyle{empty}

\maketitle


\vspace{0.2cm}
\section{Introduction}
The reduction procedure is very useful in dynamical systems, since it gives rise to another system with less degrees of freedom. 
The most general reduction theorem for Hamiltonian systems is the Marsden-Ratiu reduction of Poisson manifolds \cite{mars-ratiu} (see also \cite{ort-rat-book}). Given a Poisson manifold
$M$ and a submanifold $N \subset M$, they considered a canonical vector subbundle $E$ of the tangent bundle $TM$ restricted to $N$ such that 
$E \cap TN$ defines a regular, integrable distribution on $N$ (hence, it defines a regular foliation $\mathcal{F}$ on $N$). The question was the
existence of a Poisson structure on the
quotient $N / \mathcal{F}$ from the one on $M$. In \cite{mars-ratiu}, the authors gave a necessary and sufficient condition to ensure this. 
The Marsden-Ratiu reduction by distributions has been reformulated by Falceto and Zambon in \cite{fal-zam}.
It was shown in \cite{fal-zam} that the sufficient condition for the Marsden-Ratiu reduction theorem always hold unless the distribution
$E$ is zero. Moreover, they observed that the assumptions on the subbundle $E$ and the sufficient condition of the Marsden-Ratiu reduction theorem can be weakend to ensure the Poisson structure on $N / \mathcal{F}$.

In \cite{nambu}, Y. Nambu introduced a generalization of Hamiltonian dynamics which is based on ternary operations. To outline the basic principles of Nambu's generalized dynamics,
L. Takhtajan \cite{takh} introduced the notion of Nambu-Poisson manifolds as $n$-ary generalization of Poisson manifolds. Later, Nambu mechanics and properties of Nambu-Poisson
manifolds were extensively studied by several authors from different perspectives \cite{chat, chat-takh, curt-zachos, dito-flato-stern-takh, gau, iban-leon-mar-pad}.
A Nambu-Poisson manifold of order $n$ is a manifold $M$ equipped with a skew-symmetric $n$-ary bracket on $C^\infty(M)$ which satisfies the Leibniz rule and 
the fundamental identity (generalization of the Jacobi identity). Like Poisson manifolds, a Nambu-Poisson manifold gives rise to a singular foliation on the manifold. Moreover,
a Nambu-Poisson manifold of order $n$ corresponds to a Leibniz algebroid
on the $(n-1)$-th exterior power of its cotangent bundle. A remarkable difference
between a Poisson manifold and a Nambu-Poisson manifold of order greater than $2$ is that in the latter case, the associated (Nambu) tensor is locally decomposable.

Following the Poisson reduction theorem of Marsden and Ratiu, Ib$\acute{\text{a}}\tilde{\text{n}}$ez et al. \cite{iban-leon-mar-dieg} 
considered a similar set-up for Nambu-Poisson manifolds and studied the reduction of Nambu-Poisson manifolds.
More precisely, let $M$ be a Nambu-Poisson manifold, $N \subset M$ a submanifold and $E \subset TM|_N$ be a canonical vector subbundle of $TM$ restricted to
$N$ such that $E \cap TN$ defines a regular, integrable distribution on $N$. Similar to the Poisson case, the authors gave a necessary and sufficient condition
to ensure the existence of a Nambu-Poisson structure on $N / \mathcal{F}$ from the one on $M$. 

The main aim of the present work is to put it in record in the literature that most of the results of \cite{fal-zam} and their proofs extend in a natural way to
the context of Nambu-Poisson manifolds. To do this, we closely follow \cite{fal-zam} to adapt the definitions and methods of the proofs therein to prove the
Nambu-Poisson version of the corresponding results.

We begin with the following observation which is the Nambu-Poisson version of Lemma 2.2 of \cite{fal-zam}.
Given a canonical subbundle $E \subset TM|_N$ of a Nambu-Poisson manifold $M$ with Nambu tensor $\Pi$, either $\Pi^\sharp (Ann^1 E) \subseteq TN$ or $E = 0$ (cf. Proposition \ref{marsden-ratiu-implication}).
Using the sufficient condition of the Marsden-Ratiu version of Nambu-Poisson reduction theorem \cite{iban-leon-mar-dieg}, we conclude that the 
reduction is always ensured if $E \neq 0$ (cf. Proposition \ref{E= non zero or zero}).

Next we show that the more general Falceto-Zambon Poisson reduction theorem extends naturally for Nambu-Poisson manifolds.
More explicitly, we show that the
canonicity of $E$ and the sufficient condition for the Marsden-Ratiu reduction can be weakend in an appropriate way to ensure the reduction (cf. Theorems \ref{marsden-ratiu-improve}, \ref{mr-improve-thm}).
This refines the reduction of Nambu-Poisson manifolds considered by Ib$\acute{\text{a}}\tilde{\text{n}}$ez et al \cite{iban-leon-mar-dieg}.
The Falceto-Zambon version of reduction theorem involves a smaller subbundle $D \subseteq E \subseteq TM|_N$ such that $E \cap TN \subseteq D$.
We state the Falceto-Zambon version of Nambu-Poisson map reduction and dynamics reduction (cf. Proposition \ref{n-p-map-red}, Theorem \ref{n-p-dynamics-red}), whose proofs are same as the Marsden-Ratiu case.
In the following we also deduce the algebraic interpretation of our reduction theorem (cf. Theorem \ref{algebraic-intrprttn}) and reduction of subordinate
Nambu structures (cf. Proposition \ref{reduction-subordinate}). Motivated from \cite{fal-zam} we give an application of the Falceto-Zambon reduction theorem when the subbundle $D \subset TM|_N$ 
is the restriction of some suitable integrable distribution on $M$ (subsection \ref{subsec-4.5}).

The notion of gauge transformations of Poisson structures associated with certain closed $2$-forms was introduced by {\v S}evera and Weinstein \cite{sev-wein} 
in connection with Poisson-sigma models. Roughly, a gauge transformation modifies a given Poisson structure by adding to its leafwise symplectic structure 
the pullback of the globally defined $2$-form. In this note we introduce gauge transformations of Nambu-Poisson structures (of order $n$) associated with 
certain closed $n$-forms. We show that gauge equivalent Nambu-Poisson structures on a manifold gives rise to same singular foliation, and corresponds to
 isomorphic Leibniz algebroids (cf. Remark \ref{gauge-n-p-same-fol}, Proposition \ref{gauge-n-p-same-leib}). We believe that gauge transformations of Nambu-Poisson structures will have connection with Nambu-sigma models, 
recently considered by B. Jurco and P. Schupp \cite{jurco-schupp1, jurco-schupp2 }. Finally, we show that gauge transformations commute with the reduction procedure (cf. Theorem \ref{gauge-red=red-gauge}).

\noindent{\bf Organization.} In section \ref{sec-2} we recall some basic preliminaries on Nambu-Poisson manifolds and their Marsden-Ratiu reduction. 
In section \ref{sec-3} we show that the Marsden-Ratiu reduction for Nambu-Poisson manifolds is always ensured unless the canonical vector subbundle is zero. Section \ref{sec-4} is devoted to the 
version of Falceto-Zambon  reduction theorem for Nambu-Poisson manifolds. Finally, in section \ref{sec-5} we introduce gauge transformations of 
Nambu-Poisson structures and prove Theorem \ref{gauge-red=red-gauge}.

\section{Nambu-Poisson manifolds and M-R reduction}\label{sec-2}
In this section we recall some basic preliminaries on Nambu-Poisson manifolds \cite{duf-zung, gau, iban-leon-mar-pad} and Marsden-Ratiu reduction \cite{iban-leon-mar-dieg}.

\subsection{Nambu-Poisson manifolds}

\begin{defn}
 Let $M$ be a smooth manifold of dimension $m$. A {\it Nambu-Poisson structure} of order $n$ $(n \leq m)$ on $M$ is an $n$-multilinear skew-symmetric bracket
$$\{, \ldots,\} : C^\infty(M) \times \stackrel{(n)}{\cdots} \times C^\infty(M) \rightarrow C^\infty(M)$$
on the space $C^\infty(M)$ of smooth functions on $M$ satisfying the following:
\begin{itemize}
 \item[(i)] Leibniz rule:
$ \{ f_1, \ldots, f_{n-1}, gh\} = g \{ f_1, \ldots, f_{n-1}, h\} + \{ f_1, \ldots, f_{n-1}, g\} h ,$
 \item[(ii)] fundamental identity (generalization of the Jacobi identity):
$$\{g_1, \ldots, g_{n-1}, \{f_1, \ldots, f_n \} \} = \sum_{k=1}^n \{ f_1, \ldots, f_{k-1}, \{ g_1, \ldots, g_{n-1} , f_k \}, \ldots, f_n \},$$
\end{itemize}
for all $f_1, \ldots, f_n, g_1, \ldots, g_{n-1}, g, h \in C^\infty(M).$ 
\end{defn}

The pair $(M, \{, \ldots,\})$ is called a {\it Nambu-Poisson manifold} of order $n$.
In this paper, by a Nambu-Poisson manifold,
we shall always mean a Nambu-Poisson manifold of order $n$.  See \cite{iban-leon-mar-dieg , iban-leon-mar-pad} for examples of Nambu-Poisson manifolds.
A smooth map between two Nambu-Poisson manifolds of same order $n$ is called a {\it Nambu-Poisson map} if it preserves the corresponding brackets.
A Nambu-Poisson manifold of order $2$ is nothing but a Poisson manifold.

Let $(M, \{,\ldots, \})$ be a Nambu-Poisson manifold of order $n$. Since the bracket is skew-symmetric and satisfies the Leibniz rule, it follows that there exists
a skew-symmetric tensor $\Pi \in \Gamma(\Lambda^nTM)$ of type $(n, 0)$ such that
$$ \Pi (df_1, \ldots, df_n) = \{f_1, \ldots, f_n\},$$
for all $f_1, \ldots, f_n \in C^\infty(M).$ In this case, $\Pi$ is called the corresponding Nambu tensor and the Nambu-Poisson manifold $(M, \{,\ldots,\})$ is also denoted
by $(M, \Pi)$. The tensor $\Pi$ induces a bundle map
 $\Pi^\sharp : \Lambda^{n-1}T^*M \rightarrow TM$ given by
$$\langle \beta, \Pi^\sharp (\alpha_1 \wedge \cdots \wedge \alpha_{n-1}) \rangle = \Pi (\alpha_1, \ldots, \alpha_{n-1}, \beta),~~ \forall \alpha_i, \beta \in T^*M.$$
Given any $(n-1)$ functions $f_1, \ldots, f_{n-1} \in C^\infty(M)$, the {\it Hamiltonian vector
field} associated to these functions, denoted by $X_{f_1 ,\ldots,f_{n-1}}$ and is defined by
$$ X_{f_1 ,\ldots,f_{n-1}} = \Pi^\sharp (df_1 \wedge \cdots \wedge df_{n-1}).$$
Then the fundamental identity in terms of Hamiltonian vector fields can also be rephrased as
\begin{align}\label{fund-hamiltonian}
 [X_{g_1,\ldots,g_{n-1}} , X_{f_1,\ldots,f_{n-1}} ] = \sum_{k=1}^{n-1} X_{f_1,...,\{g_1 , \ldots ,g_{n-1} ,f_k \},...,f_{n-1}}, ~~ \forall g_i, f_j \in C^\infty(M).
\end{align}

The following result describes the local structure of a Nambu-Poisson manifold \cite{duf-zung,gau}.
\begin{thm}\label{nambu-poisson-tensor-decomposable}
 Let $M$ be a smooth manifold of dimension $m$. Then a skew-symmetric $n$-tensor $\Pi \in \Gamma(\Lambda^nTM)$, $n \geq 3$, defines a Nambu-Poisson structure on $M$
if and only if for all $x \in M$ with $\Pi (x) \neq 0$, there exist local coordinates $(U ; x^1, \ldots, x^n, x^{n+1}, \ldots, x^m)$ around $x$
such that
$$ \Pi|_U = \frac{\partial}{\partial x^1} \wedge \cdots \wedge \frac{\partial}{\partial x^n}.$$
\end{thm}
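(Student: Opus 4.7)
The plan is to prove both directions separately, the reverse implication being a routine verification and the forward implication splitting into a decomposability step, a Frobenius step, and a normalization step.

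For the ``if'' direction, assume local coordinates $(x^1, \ldots, x^m)$ in which $\Pi|_U = \frac{\partial}{\partial x^1} \wedge \cdots \wedge \frac{\partial}{\partial x^n}$. Then $\{f_1, \ldots, f_n\}$ is the Jacobian determinant of $(f_1, \ldots, f_n)$ with respect to $(x^1, \ldots, x^n)$, the Leibniz rule is immediate, and the fundamental identity reduces to a routine multilinear computation on determinants; equivalently, each $\partial/\partial x^i$ with $i \le n$ is a derivation of the bracket, from which the identity follows.

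For the ``only if'' direction, fix $x_0 \in M$ with $\Pi(x_0) \neq 0$ and work on a neighbourhood $U$ on which $\Pi$ is nowhere vanishing. The main obstacle is the first step, namely to show that $\Pi|_U$ is \emph{decomposable}, i.e.\ $\Pi|_U = v_1 \wedge \cdots \wedge v_n$ for suitable smooth vector fields $v_1, \ldots, v_n$ on $U$; this is precisely where the hypothesis $n \ge 3$ is used. The strategy is to derive, from the fundamental identity applied to judiciously chosen products of test functions, the Pl\"ucker-type relation
\[
(\iota_\alpha \Pi) \wedge \Pi \;=\; 0 \qquad \text{for all } \alpha \in \Gamma(\Lambda^{n-1}T^*M),
\]
and to invoke the classical linear-algebra fact that, for $n \ge 3$, this condition is equivalent to pointwise decomposability of an $n$-vector; smooth decomposing vector fields on $U$ can then be selected.

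Once decomposability is in hand, the distribution $D = \Pi^\sharp(\Lambda^{n-1}T^*M) = \mathrm{span}(v_1,\ldots,v_n)$ has constant rank $n$ near $x_0$, and the identity \eqref{fund-hamiltonian} shows that the Hamiltonian vector fields, which span $D$, close under the Lie bracket. Hence $D$ is involutive, and Frobenius' theorem yields coordinates $(x^1, \ldots, x^m)$ on a smaller neighbourhood with $D = \mathrm{span}(\partial/\partial x^1, \ldots, \partial/\partial x^n)$, so that $\Pi|_U = f \cdot \frac{\partial}{\partial x^1} \wedge \cdots \wedge \frac{\partial}{\partial x^n}$ for some nowhere-vanishing smooth function $f$. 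Substituting the coordinate functions into the fundamental identity then forces $f$ to be independent of the transverse coordinates $x^{n+1}, \ldots, x^m$, and a leafwise change of coordinates absorbing $f$ into $x^1$ produces the desired canonical form.
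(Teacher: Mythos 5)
The paper does not actually prove Theorem \ref{nambu-poisson-tensor-decomposable}; it is quoted from the literature (Dufour--Zung, Gautheron), so there is no in-paper argument to compare against. Your architecture (Pl\"ucker relation $\Rightarrow$ pointwise decomposability, then Frobenius for the characteristic distribution, then normalization of the coefficient) is indeed the standard route in those references. But two points need attention. First, the decomposability step is the entire content of the theorem, and you only assert it: deriving the Pl\"ucker-type relation $(\iota_\alpha\Pi)\wedge\Pi=0$ from the fundamental identity via ``judiciously chosen products of test functions'' is a genuinely delicate multilinear-algebra argument (it occupies the bulk of Gautheron's proof), and this is also where $n\ge 3$ actually enters --- the linear-algebra equivalence of the Pl\"ucker condition with decomposability holds for all $n$, whereas the implication ``fundamental identity $\Rightarrow$ Pl\"ucker'' fails for $n=2$ (a symplectic bivector in dimension $4$ satisfies Jacobi but is not decomposable). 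As written, the hardest step is a placeholder.

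Second, and more concretely, your final step contains a false assertion: the fundamental identity does \emph{not} force $f$ in $\Pi|_U=f\,\partial_{x^1}\wedge\cdots\wedge\partial_{x^n}$ to be independent of the transverse coordinates. By Remark \ref{E= non zero or zero}'s neighbouring Remark 2.3 (and Example 4.6 of this very paper), $g\Pi$ is a Nambu tensor for \emph{any} smooth $g$ once $\Pi$ is a Nambu tensor of order $\ge 3$; e.g.\ $w\,\partial_x\wedge\partial_y\wedge\partial_z$ on $\R^4$ is Nambu-Poisson although its coefficient depends only on the transverse coordinate $w$. Fortunately your conclusion survives without this claim: where $f$ is nowhere vanishing one sets $y^1=\int_0^{x^1}f^{-1}(t,x^2,\ldots,x^m)\,dt$, so that $\partial_{x^1}=f^{-1}\partial_{y^1}$ and $\partial_{x^j}=(\partial y^1/\partial x^j)\partial_{y^1}+\partial_{x^j}^{\mathrm{new}}$ for $j\ge 2$, whence
\begin{equation*}
f\,\partial_{x^1}\wedge\partial_{x^2}\wedge\cdots\wedge\partial_{x^n}=\partial_{y^1}\wedge\partial_{x^2}^{\mathrm{new}}\wedge\cdots\wedge\partial_{x^n}^{\mathrm{new}},
\end{equation*}
because the $\partial_{y^1}$-components of the later factors are killed by the first. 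You should replace the incorrect ``$f$ is independent of the transverse coordinates'' step by this direct rescaling, and either carry out or explicitly cite the derivation of the Pl\"ucker relation from the fundamental identity.
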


For each $m \in M$, let $\mathcal{D}_m M \subset T_m M$ be the subspace of the tangent space at $m$ generated by all
Hamiltonian vector fields at $m$. It follows from Equation (\ref{fund-hamiltonian})
that $\mathcal{D}(M)$ defines a (singular) integrable distribution, called the {\it characteristic distribution} of $M$,
whose leaves are either $n$-dimensional submanifolds endowed with a volume form or just singletons.

\begin{remark}
It can be shown that, if $\Pi$ is a Nambu tensor of order $\geq 3$ and $g$ is any smooth function, then $g \Pi$ is also a Nambu tensor \cite{duf-zung}.
\end{remark}

Let $M$ be a smooth manifold. Consider the bundle
$$ \mathcal{T}^n (M) = TM \oplus {\Lambda}^{n-1}T^*M.$$
The space of sections of $ \mathcal{T}^n (M)$ carries a higher order Dorfman bracket $\llbracket ~,~ \rrbracket$, given by
\begin{align}\label{dorfman-brk}
\llbracket (X , \alpha) , (Y , \beta) \rrbracket = ([X,Y] , \mathcal{L}_X \beta - i_Y d \alpha),
\end{align}
for $(X , \alpha) , (Y , \beta) \in \Gamma( \mathcal{T}^n (M))$, where $\mathcal{L}$ denotes the Lie derivative and $i$ denotes the contraction operator.

Another characterization of Nambu-Poisson tensor is given by the following \cite{bi-sheng}.
\begin{prop}\label{nambu-characterization-bi-sheng}
 Let $\Pi \in \Gamma(\Lambda^nTM)$ be a skew-symmetric $n$-tensor on $M$, and $\Pi^\sharp : \Lambda^{n-1}T^*M \rightarrow TM$ be the induced bundle map. Then
$$ L_\Pi : = Graph ~ (\Pi^\sharp) = \{ (\Pi^\sharp \alpha, \alpha) |~ \alpha \in {\Lambda}^{n-1}T^*M \} \subset \mathcal{T}^n(M)$$
is closed under the higher order Dorfman bracket $\llbracket ~,~ \rrbracket$ if and only if $\Pi$ is a Nambu-Poisson tensor.
\end{prop}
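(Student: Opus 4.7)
Since $L_\Pi$ is the graph of $\Pi^\sharp$, a pair lies in $L_\Pi$ iff its first component is $\Pi^\sharp$ of its second; hence closure of $L_\Pi$ under $\llbracket\,,\,\rrbracket$ is equivalent to the identity
\[
[\Pi^\sharp \alpha,\, \Pi^\sharp \beta] \;=\; \Pi^\sharp\bigl(\mathcal{L}_{\Pi^\sharp \alpha}\beta \;-\; i_{\Pi^\sharp \beta}\, d\alpha\bigr) \qquad (\ast)
\]
holding for all $\alpha,\beta \in \Gamma(\Lambda^{n-1}T^*M)$. The plan is to test $(\ast)$ on decomposable closed forms, where it reduces to a computation I can do by hand, and then promote the result to arbitrary forms by $C^\infty(M)$-linearity.

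I would take $\alpha = df_1\wedge\cdots\wedge df_{n-1}$ and $\beta = dg_1\wedge\cdots\wedge dg_{n-1}$. For such forms $d\alpha = 0$, and using $\Pi^\sharp\alpha = X_{f_1,\ldots,f_{n-1}}$, Cartan's formula with the Leibniz rule yields
\[
\mathcal{L}_{\Pi^\sharp\alpha}\beta \;=\; \sum_{k=1}^{n-1} dg_1 \wedge \cdots \wedge d\{f_1,\ldots,f_{n-1},g_k\} \wedge \cdots \wedge dg_{n-1}.
\]
Applying $\Pi^\sharp$ term by term, $(\ast)$ becomes
\[
[X_{f_1,\ldots,f_{n-1}},\, X_{g_1,\ldots,g_{n-1}}] \;=\; \sum_{k=1}^{n-1} X_{g_1,\ldots,\{f_1,\ldots,f_{n-1},g_k\},\ldots,g_{n-1}},
\]
which is precisely the Hamiltonian-vector-field form \eqref{fund-hamiltonian} of the fundamental identity; evaluating this equality on an arbitrary function returns the fundamental identity on $\{,\ldots,\}$, and the converse implication is immediate. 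This already handles the ``only if'' direction, since closure of $L_\Pi$ forces $(\ast)$ in particular on such decomposable closed forms.

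For the ``if'' direction I would extend $(\ast)$ from decomposable closed forms to arbitrary $\alpha,\beta$. Locally such forms span $\Gamma(\Lambda^{n-1}T^*M)$ over $C^\infty(M)$, so one checks compatibility of $(\ast)$ under $\alpha \mapsto h\alpha$ and $\beta \mapsto h\beta$ using the Leibniz-type identities $[\Pi^\sharp(h\alpha), \Pi^\sharp\beta] = h[\Pi^\sharp\alpha,\Pi^\sharp\beta] - (\Pi^\sharp\beta)(h)\,\Pi^\sharp\alpha$, $\mathcal{L}_{hX} = h\mathcal{L}_X + dh\wedge i_X$, and $d(h\alpha) = dh\wedge\alpha + h\,d\alpha$. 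The rescaling in $\beta$ is tensorial after the obvious cancellations; the main obstacle is the residual term $\Pi^\sharp(dh\wedge i_{\Pi^\sharp\alpha}\beta) + \Pi^\sharp(dh\wedge i_{\Pi^\sharp\beta}\alpha)$ left over by the $\alpha \mapsto h\alpha$ expansion, whose vanishing is not formal and would fail for a generic skew-symmetric $n$-tensor. To dispose of it I would work locally, observing that it vanishes trivially where $\Pi = 0$, and that near any point with $\Pi \neq 0$ the canonical local normal form $\Pi = \partial_{x^1}\wedge\cdots\wedge\partial_{x^n}$ from Theorem \ref{nambu-poisson-tensor-decomposable} forces the required cancellation by a short coordinate computation on the basis forms $dx^{i_1}\wedge\cdots\wedge dx^{i_{n-1}}$. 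Combining the two directions gives the claimed equivalence.
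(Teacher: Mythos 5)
The paper does not actually prove this proposition --- it is quoted from Bi--Sheng \cite{bi-sheng} without proof --- so there is no in-paper argument to compare against; judged on its own, your proposal is correct and is essentially the standard argument. Closure of the graph is indeed equivalent to the identity $[\Pi^\sharp\alpha,\Pi^\sharp\beta]=\Pi^\sharp(\mathcal{L}_{\Pi^\sharp\alpha}\beta-i_{\Pi^\sharp\beta}d\alpha)$, its restriction to exact decomposable forms is exactly (\ref{fund-hamiltonian}) and hence the fundamental identity, and you correctly isolate the only non-formal point in the converse: the defect of the identity under $\alpha\mapsto h\alpha$ is $\Pi^\sharp\bigl(dh\wedge(i_{\Pi^\sharp\alpha}\beta+i_{\Pi^\sharp\beta}\alpha)\bigr)$, which is pointwise tensorial in all its arguments and so can be checked at each point: it vanishes where $\Pi=0$, and in the normal form $\Pi=\partial_{x^1}\wedge\cdots\wedge\partial_{x^n}$ of Theorem \ref{nambu-poisson-tensor-decomposable} the two contractions contribute the coefficient $(-1)^{n-1}+(-1)^{n}=0$ on the only basis form that could survive, so the residual term does cancel as you claim. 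Two small points worth making explicit in a write-up: the appeal to Theorem \ref{nambu-poisson-tensor-decomposable} restricts this step to $n\ge 3$ (for $n=2$ the defect is $(\Pi(\alpha,\beta)+\Pi(\beta,\alpha))\,\Pi^\sharp dh=0$ directly by skew-symmetry), and one should note that tensoriality in $\beta$ must be used first, so that the identity holds for $\alpha$ a basis form and $\beta$ arbitrary before the $\alpha$-rescaling is performed.
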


It follows that, if $(M, \Pi)$ is
a Nambu-Poisson manifold of order $n$, the bundle $\Lambda^{n-1}T^*M \rightarrow M $ carries a Leibniz algebroid structure whose
bracket is given by
\begin{equation}\label{leib-brack}
 \boldsymbol{\{}  \alpha, \beta \boldsymbol{\}}_\Pi  = \mathcal{L}_{\Pi^{\sharp} \alpha} \beta - i_{\Pi^\sharp \beta} d\alpha,
\end{equation}
for all $\alpha , \beta \in \Omega^{n-1}(M)$ and the anchor is given by the map $\Pi^{\sharp}$ \cite{bi-sheng, wade}.

\subsection{Marsden-Ratiu reduction}\label{subsec-m-r}

Let $(M, \{, \ldots,\})$ be a Nambu-Poisson manifold of order $n$ with corresponding Nambu tensor $\Pi$. Let $N \subset M$ be a Nambu-Poisson submanifold and $i : N \hookrightarrow M$ be the inclusion. That is, 
$N$ has a Nambu-Poisson structure such that the inclusion map $i$ is a Nambu-Poisson map.
Therefore, if $h \in C^\infty(M)$ is such that $h|_N \equiv 0$ \big(that is, $dh \in (TN)^0$ \big), then for any
$f_1,\ldots, f_{n-1} \in C^\infty(M),$
$$ \Pi^{\sharp}(df_1 \wedge \cdots \wedge df_{n-1}) (dh)(p) = \{f_1 \circ i,\ldots,f_{n-1} \circ i, h \circ i \}_N(p) = 0, $$
for all $p \in N$, where $\Pi^\sharp : \Lambda^{n-1}T^*M \rightarrow TM$ is the bundle map induced by $\Pi$. Thus, it
implies that $\Pi^{\sharp}(\Lambda^{n-1}T^*_pM) \subseteq T_p N,$ for all $p \in N$. Conversely, if the above relation holds pointwise on a submanifold $N$,
then $N$ induces a Nambu-Poisson structure such that the inclusion map is a Nambu-Poisson map. The induced Nambu structure on $N$ is defined by arbitrary extensions of the functions on $N$.

Next consider a Nambu-Poisson manifold $(M, \{, \ldots, \})$ together with an integrable distribution $E$ which induces a regular foliation $\mathcal{F}$, that is,
the space of leaves $M / \mathcal{F}$ is a smooth manifold and the projecton map $\pi : M \rightarrow M / \mathcal{F}$ is a submersion.
A natural question arises, when $M / \mathcal{F}$ inherits a Nambu-Poisson structure such that $\pi$ is a Nambu-Poisson map. For that, take any $f_1,\ldots,f_{n} \in C^\infty(M / \mathcal{F})$. Then
$f_1 \circ \pi, \ldots, f_n \circ \pi$ are the functions on $M$ which are constant along the fibers of the projection \big(that is, $d(f_k \circ \pi)|_E = 0$\big). In order that $\pi$ is a Nambu-Poisson map, the function $\{f_1 \circ \pi, \ldots, f_n \circ \pi\}$ has to be constant along the fibers,
that is, $(d \{ f_1 \circ \pi, \ldots, f_n \circ \pi \} )|_E = 0.$

Both situations above may be viewed as a particular case of Marsden-Ratiu reduction theorem for Nambu-Poisson manifolds \cite{iban-leon-mar-dieg}.
Let $(M, \Pi)$ be a Nambu-Poisson manifold, $N \subset M$ a submanifold and $i: N \hookrightarrow M$ the inclusion. Let
$E \subset TM|_N$ be a subbundle of $TM$ when restricted to $N$ which satisfies the following condition:

\begin{itemize}
 \item[$\bullet$] $F := E \cap TN $ is a regular, integrable distribution on $N$. Thus, it defines a regular foliation $\mathcal{F}$ on $N$, so
 the space of leaves $\underline{N} := N / \mathcal{F}$ is a smooth manifold with projection map $\pi: N \rightarrow \underline{N}$ is
a submersion.
\end{itemize}

Note that any function on $N$ whose differential vanishes on $F$ can be extended to a function in a neighbourhood $N'$ of $N$ with differential
vanishing on $E$ \cite{fal-zam}. We assume that $N' = M$ has this property. Thus, if $C^\infty(M)_E$ denotes the space of functions on $M$ whose differential
vanish on $E$, the restriction map $i^* : C^\infty(M)_E \rightarrow C^\infty(N)_F$ is surjective.

\begin{defn}
 A triple $(M, N, E)$ with the above properties is called {\it reducible } or {\it Nambu-Poisson reducible} if $\underline{N} = N / \mathcal{F}$ has a Nambu-Poisson structure with the induced bracket $\{, \ldots,\}_{\underline{N}}$ such that
for any $f_1, \ldots, f_n \in C^\infty(\underline{N})$, and any smooth extensions $F_1, \ldots, F_n \in C^\infty(M)_E$ of the functions $f_1 \circ \pi, \ldots, f_n \circ \pi$, respectively,  
we have
$$ \pi^* \{f_1, \ldots, f_n\}_{\underline{N}}  = i^* \{F_1, \ldots, F_n \}  .$$
\end{defn}

\begin{defn}
 A subbundle $E \subset TM|_N$ is called {\it canonical}  if for any smooth functions $F_1, \ldots, F_n $  on $M$ with differentials vanishing on $E$,
 the differential of the function $\{F_1, \ldots, F_n\}$ also vanishes on $E$, that is,
$$ F_1, \ldots, F_n \in C^\infty(M)_E ~ ~ \Rightarrow ~ ~ \{F_1, \ldots, F_n\} \in C^\infty(M)_E .$$
\end{defn}

 The Marsden-Ratiu reduction theorem for Nambu-Poisson manifolds \cite{iban-leon-mar-dieg} is the following.

\begin{thm}\label{marsden-ratiu}
 Let $(M, \{, \ldots,\})$ be a Nambu-Poisson manifold of order $n$ with associated Nambu tensor $\Pi$. 
Let $N \subset M$ be a submanifold, and $E \subseteq TM|_N$ be a canonical subbundle such that
 $F:= E \cap TN$ is a regular, integrable distribution on $N$.
Then the triple $(M, N, E)$ is reducible if and only if
\begin{align*}
 \Pi^{\sharp}(Ann^1 E) \subseteq TN + E ,
\end{align*}
where $Ann^1 E_p = \{ \eta \in \Lambda^{n-1} T_p^*M | ~ i_v \eta = 0 , ~\forall v \in E_p ,~ p \in N\}.$
\end{thm}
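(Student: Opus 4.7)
My plan is to adapt the Marsden--Ratiu argument in the spirit of the Poisson case treated in \cite{fal-zam}, splitting the proof into the two implications.

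\textbf{Sufficiency ($\Leftarrow$).} Given $f_1,\ldots,f_n \in C^\infty(\underline{N})$, I would define $\{f_1,\ldots,f_n\}_{\underline{N}}$ as the unique function on $\underline{N}$ whose pullback to $N$ equals $i^*\{F_1,\ldots,F_n\}$ for any extensions $F_k \in C^\infty(M)_E$ of $f_k \circ \pi$. Two well-definedness issues must be checked. That $i^*\{F_1,\ldots,F_n\}$ descends to $\underline{N}$ is immediate from canonicality of $E$: $\{F_1,\ldots,F_n\} \in C^\infty(M)_E$, so its differential annihilates $F = E \cap TN$, meaning $i^*\{F_1,\ldots,F_n\}$ is $\mathcal{F}$-invariant. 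Independence of the chosen extensions reduces, by multilinearity and antisymmetry, to the case where only $F_n$ is replaced, say by $F_n'$ with $F_n'|_N = F_n|_N$; then $G := F_n - F_n'$ lies in $C^\infty(M)_E$ with $G|_N = 0$, so for each $p \in N$, $dG|_p \in (T_pN)^0 \cap (E_p)^0 = (T_pN+E_p)^0$. Since
\begin{align*}
\{F_1,\ldots,F_{n-1},G\}(p) = \bigl\langle dG|_p,\ \Pi^\sharp(dF_1\wedge\cdots\wedge dF_{n-1})|_p\bigr\rangle
\end{align*}
and $dF_1\wedge\cdots\wedge dF_{n-1}$ lies in $Ann^1 E$ pointwise on $N$ (each $dF_i$ kills $E$), the hypothesis $\Pi^\sharp(Ann^1 E) \subseteq TN+E$ forces this pairing to vanish. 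The Leibniz rule and the fundamental identity for $\{,\ldots,\}_{\underline{N}}$ then descend directly from the corresponding properties on $M$, because canonicality ensures that products and iterated brackets of extensions remain in $C^\infty(M)_E$.

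\textbf{Necessity ($\Rightarrow$).} Assume the triple is reducible, and fix $p \in N$ and $\alpha \in Ann^1 E_p$. Using a frame of $T_p M$ adapted to $E_p$ one checks $Ann^1 E_p = \Lambda^{n-1}(E_p)^0$, so it suffices to treat decomposable $\alpha = \alpha_1 \wedge \cdots \wedge \alpha_{n-1}$ with each $\alpha_j \in (E_p)^0$; moreover, each $\alpha_j$ can be realised as $dF_j|_p$ for some $F_j \in C^\infty(M)_E$, using the extension property recalled just before the definition of reducibility together with a cutoff argument. To conclude $\Pi^\sharp(\alpha) \in T_p N + E_p$ it is then enough to show $\langle \beta,\Pi^\sharp(\alpha)\rangle = 0$ for every $\beta \in (T_pN+E_p)^0$. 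Picking $F \in C^\infty(M)_E$ with $F|_N \equiv 0$ near $p$ and $dF|_p = \beta$, one has
\begin{align*}
\bigl\langle \beta, \Pi^\sharp(\alpha)\bigr\rangle = \{F_1,\ldots,F_{n-1},F\}(p).
\end{align*}
Both $F$ and the zero function lie in $C^\infty(M)_E$ and restrict to $0$ on $N$ (hence both are valid extensions of the zero function on $\underline{N}$); reducibility then equates $i^*\{F_1,\ldots,F_{n-1},F\}$ with $i^*\{F_1,\ldots,F_{n-1},0\} = 0$, forcing the right-hand side to vanish.

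\textbf{Main obstacle.} The technical crux is the construction of the auxiliary functions: realising a prescribed covector $\beta \in (T_pN+E_p)^0$ as $dF|_p$ for some $F \in C^\infty(M)_E$ with $F|_N = 0$, and similarly realising arbitrary elements of $(E_p)^0$ as differentials of members of $C^\infty(M)_E$. This requires first extending $E$ to a genuine subbundle on a neighbourhood of $N$ in $M$, choosing coordinates aligned both with $N$ and with a local frame of the extended distribution, and writing the desired functions explicitly in terms of transverse coordinates. This is the only step that is genuinely analytic rather than algebraic, and it is the Nambu-Poisson counterpart of the extension construction carried out in \cite{fal-zam} for ordinary Poisson manifolds.
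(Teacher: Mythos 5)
Your proof is correct and takes the same route as the paper where the paper gives one: Theorem \ref{marsden-ratiu} itself is quoted from the literature without proof, but your sufficiency argument coincides with the one spelled out (and generalized) in Section \ref{sec-4} for the Falceto--Zambon version, namely canonicity of $E$ gives descent to $\underline{N}$ and $\Pi^\sharp(Ann^1 E)\subseteq TN+E$ gives independence of the chosen extensions, after which Leibniz and the fundamental identity descend. The necessity half, which the paper omits, is the standard duality argument $(T_pN+E_p)^{00}=T_pN+E_p$ applied to decomposable generators of $Ann^1E_p=\Lambda^{n-1}(E_p)^0$, and it is sound; the only genuinely technical point --- producing functions in $C^\infty(M)_E$ vanishing on $N$ with a prescribed differential at $p$ --- is correctly identified and handled via the extension property assumed in subsection \ref{subsec-m-r}.
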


Note that, $Ann^1 E_p$ is generated by elements of the form $\alpha_1 \wedge \cdots \wedge \alpha_{n-1}$,
where for all $k=1, \ldots, n-1$; $\alpha_k \in E_p^0=\{ \alpha \in T_p^*M | ~ \alpha (v) = 0, ~\forall v \in E_p,~ p \in N \}$ is the annihilator of $E_p$. When $M$ is a Poisson manifold (that is, when $n=2$), this is the Marsden-Ratiu reduction theorem  for Poisson manifolds \cite{mars-ratiu}.
We remark that the singular version of the Marsden-Ratiu reduction theorem for Nambu-Poisson manifolds has been studied by the author in \cite{das}.

\section{Non-zero $ E \subset TM|_N $}\label{sec-3}

In this section, we show that the triple $(M, N, E)$ as described in subsection \ref{subsec-m-r} is always reducible provided the canonical subbundle $E \neq 0.$

The following is a generalization of Lemma 2.2 of \cite{fal-zam}.

\begin{prop}\label{marsden-ratiu-implication}
 Let $(M, \{, \ldots,\})$ be a Nambu-Poisson manifold with associated Nambu tensor $\Pi$,  and $ N \subset M $ be a submanifold. Assume that $ E \subset TM|_N$ is a canonical subbundle. Then either 
$$ \Pi^{\sharp} (Ann^1 E) \subseteq TN \hspace*{0.2cm} \text{  or } \hspace*{0.2cm} E=0 .$$
\end{prop}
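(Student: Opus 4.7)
The plan is to argue pointwise. Since $E$ is a vector subbundle of constant rank, the global statement ``$E=0$'' is equivalent to $E_p = 0$ for every $p \in N$; hence it suffices to show that whenever $E_p \neq 0$ one has $\Pi^{\sharp}(Ann^1 E_p) \subseteq T_pN$. Fix such a $p$. If $\Pi(p) = 0$ then $\Pi^{\sharp}|_p$ vanishes and the conclusion is immediate, so assume $\Pi(p) \neq 0$ and apply Theorem \ref{nambu-poisson-tensor-decomposable} to obtain local coordinates $(x^1, \ldots, x^m)$ around $p$ with $\Pi = \partial_1 \wedge \cdots \wedge \partial_n$.

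Write $U_p := \operatorname{span}\{\partial_1|_p, \ldots, \partial_n|_p\}$ for the image of $\Pi^{\sharp}|_p$ and put $A := E_p \cap U_p$, $k_A := \dim A$. A short linear-algebra calculation identifies $Ann^1 E_p$ with $\Lambda^{n-1}(E_p^0)$, shows that the restriction map $T_p^{*}M \to U_p^{*}$ carries $E_p^0$ onto the annihilator $A^0 \subset U_p^{*}$, and uses the isomorphism $\Lambda^{n-1}U_p^{*} \to U_p$ given by contraction with $\Pi|_p$ to determine
$$\Pi^{\sharp}(Ann^1 E_p) = \begin{cases} 0 & \text{if } k_A \geq 2, \\ A & \text{if } k_A = 1, \\ U_p & \text{if } k_A = 0. \end{cases}$$
The first sub-case needs no further argument, so only $k_A \in \{0,1\}$ requires work.

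In these sub-cases, canonicity will be used to push the relevant subspace of $U_p$ into $T_pN$. Assume for contradiction that some coordinate direction $\partial_{j_0}|_p$ appearing in the alleged image fails to lie in $T_pN$. Choose test functions $F_i = x^i$ for $i \in \{1,\ldots,n\} \setminus \{j_0\}$ together with a free $F_{j_0} \in C^{\infty}(M)_E$; the matrix $(\partial_l F_i)_{i,l \leq n}$ is then an identity matrix except in its $j_0$-th row, so $\{F_1,\ldots,F_n\} = \partial_{j_0} F_{j_0}$. Pick $v \in E_p$ of the form $\partial_{j_0}|_p$ when $k_A = 1$, or a vector transversal to $U_p$ when $k_A = 0$ (after an innocuous transversal coordinate change that preserves $\Pi$). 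The canonicity identity $v(\{F_1,\ldots,F_n\})|_p = 0$ then collapses to the vanishing of a single mixed second derivative of $F_{j_0}$ at $p$. Since $\partial_{j_0}|_p \notin T_pN$, one can construct $F_{j_0}$ satisfying $dF_{j_0}|_E = 0$ on $N$ while forcing this second derivative to be nonzero, contradicting canonicity. Running the argument across each relevant coordinate direction yields the required inclusion.

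The main obstacle is the production of the test function $F_{j_0} \in C^{\infty}(M)_E$ with the prescribed two-jet at $p$, because $dF_{j_0}|_E = 0$ must hold throughout a neighbourhood in $N$, not only at $p$. This is precisely where the extension property noted in the paragraph preceding the notion of reducibility is essential: any function on $N$ whose differential vanishes on $F = E \cap TN$ extends to a function on $M$ with differential vanishing on $E$, which supplies the needed test functions with the desired jet data. With this extension step available, the remaining second-derivative computation in each sub-case is routine.
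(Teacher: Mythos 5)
Your reduction to a pointwise statement and the linear-algebra computation of $\Pi^{\sharp}(Ann^1 E_p)$ in terms of $k_A=\dim\bigl(E_p\cap U_p\bigr)$ are correct, but the step where canonicity is invoked has a genuine gap. Canonicity constrains the bracket only when \emph{all} $n$ entries have differentials vanishing on $E$, whereas your test functions $F_i=x^i$ ($i\neq j_0$), taken from the chart of Theorem \ref{nambu-poisson-tensor-decomposable}, need not lie in $C^\infty(M)_E$. Indeed $dx^i|_{E_p}=0$ for all $i\in\{1,\dots,n\}\setminus\{j_0\}$ would force $E_p\subseteq \operatorname{span}(\partial_{j_0},\partial_{n+1},\dots,\partial_m)$, which does not follow from $E_p\cap U_p=A$ (the projection of $E_p$ onto $U_p$ can be strictly larger than $A$); one would first have to rotate the complement of $U_p$, and even after such a pointwise adjustment the membership $dx^i|_{E_q}=0$ must hold for all $q$ in a neighbourhood of $p$ in $N$, which a linear change of basis at the single point $p$ cannot deliver. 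So the identity $\{F_1,\dots,F_n\}=\partial_{j_0}F_{j_0}$ is being fed into the canonicity hypothesis with functions to which that hypothesis does not apply.

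The second gap is the production of $F_{j_0}$: you need $dF_{j_0}|_{E}=0$ along all of $N$ \emph{together with} a prescribed nonzero mixed second derivative at $p$ in the direction $\partial_{j_0}$, which by assumption is transverse to $N$. The extension property quoted before the definition of reducibility only guarantees that a function on $N$ with differential vanishing on $F$ admits \emph{some} extension in $C^\infty(M)_E$; it gives no control over the two-jet of that extension at $p$, nor over its first derivatives in directions transverse to $TN+E$, so calling the remaining construction ``routine'' defers what is really the entire content of the proposition. For comparison, the paper's proof avoids the normal form and jet constructions altogether: it applies canonicity to the products $g^2$, $gh$ and $fg$ with $g|_N=h|_N=0$, which lie in $C^\infty(M)_E$ for purely structural reasons (Leibniz rule plus vanishing on $N$), and reads off the conclusion from the Leibniz expansion evaluated at $p$. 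To salvage your route you would have to exhibit, with proof, functions in $C^\infty(M)_E$ realizing the required one- and two-jets at $p$; as it stands that existence is asserted rather than established.
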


\begin{proof}
Suppose there is a point $p \in N$ such that $\Pi^{\sharp} (Ann^1 E_p) \nsubseteq T_p N$. Since $Ann^1 E_p$ is generated by elements of the form
$\alpha_1 \wedge \cdots \wedge \alpha_{n-1}$, with $\alpha_i \in E_p^0$, for all $i=1, \ldots, n-1$, 
there exist functions $f_1, \ldots, f_{n-1} \in C^\infty(M)$ with differentials vanishing on $E$ such that $\Pi^{\sharp} (df_1 \wedge \cdots \wedge df_{n-1})(p) \notin T_pN$.
Hence, there is a function $g \in C^\infty(M)$ with $g|_N \equiv 0$ \big(that is, $dg(p) \in (T_pN)^0$ \big) such that
$\langle \Pi^{\sharp}(df_1 \wedge \cdots \wedge df_{n-1})(p), dg(p) \rangle \neq 0 ,$
that is,
$ \{f_1,\ldots, f_{n-1},g\}(p) \neq 0 .$
Since $d(g^2) = 2g dg$ and $g$ vanishes on $N$, the differential of the function $g^2$ also vanishes on $E$. As the bundle $E$ is canonical, 
we have $ d\{f_1,\ldots,f_{n-1},g^2\}|_E = 0$. Thus,
$d \big(g\{f_1,\ldots,f_{n-1},g\}\big)|_E = 0 $, which implies that
$$ i_{v}(dg)(p) \{f_1,\ldots,f_{n-1},g\} (p) + i_{v} \big(d\{f_1,\ldots,f_{n-1},g\}\big)(p) g(p) = 0,$$
for all $v \in E_p$. Hence, $i_v (dg)(p) = 0$, for all $v \in E_p$.

Next consider any function $h \in C^\infty(M)$ with $h|_N \equiv 0$. Then the differential of the product function $gh$ vanishes on $E$, as
$ d(gh)|_E = g dh|_E + h dg|_E  \text{~~ and ~~} g|_N = 0 = h|_N .$
Hence, from the canonicity of $E$, the differential of the function $ \{f_1,\ldots,f_{n-1}, gh\}$  also vanishes on $E$, which then
implies that $i_{v}(dh)(p) = 0$, for all $ v \in E_p $. As $(TN)^0$ is locally generated by the differential of functions vanishing on $N$,
it follows that
$ E_p \subseteq T_p N.$
Since the bundle $E \cap TN $ is a smooth distribution of constant rank, we must have $E \subseteq TN $ everywhere. Thus, for  any function
$f \in C^\infty(M)$, the differential of the function $fg$ vanishes on $E$, because
$d(f g)|_E = f dg|_E + g df|_E \text{ and } dg \in (TN)^0 \subseteq E^0 .$
Therefore, $ d \big(\{f_1,\ldots,f_{n-1},f g\}\big)|_E = 0 $. This implies that
$i_{v}(df)(p) = 0 $, for all $v \in E_p$ and $p \in N$. This can happen only when $E_p = 0$, for all $p \in N$, that is, $E = 0$. Hence the proof.
\end{proof}

\begin{exam}\label{n-1, annihilator} 
 Let $(M, \{, \ldots,\})$ be a Nambu-Poisson manifold of order $n$ with induced Nambu tensor $\Pi$, and $N \subset M$ a submanifold. Let
$$ E = \Pi^\sharp (Ann^{n-1}TN),$$
where $ (Ann^{n-1}TN)_p = \{ \eta \in \Lambda^{n-1}T^*_pM | ~ i_{v_1 \wedge \cdots \wedge v_{n-1}} \eta = 0, ~ \forall v_1, \ldots, v_{n-1} \in T_pN ,~ p \in N\}$.
Thus, $E$
is (locally) generated by vector fields $\Pi^\sharp (dh_1 \wedge \cdots \wedge dh_{n-1})$, where $h_1, \ldots, h_{n-1}$ are smooth functions with $dh_i$ is vanishing $TN$, for some $i \in \{1, \ldots, n-1\}.$
The bundle $E$ is canonical as shown in \cite{iban-leon-mar-dieg}.

Moreover, the bundle $E$ satisfies $\Pi^\sharp (Ann^1 E) \subset TN$.  
One can also conclude the same fact by using Proposition \ref{marsden-ratiu-implication}.
\end{exam}

\begin{remark}Let $(M, \{, \ldots,\})$ be a Nambu-Poisson manifold of order $n$ with associated tensor $\Pi$.
 Consider the Leibniz bracket ${\textbf \{} ~, {\textbf \}}_\Pi$ on the space of $(n-1)$ forms on $M$ given by Equation (\ref{leib-brack}).
This bracket satisfies
\begin{equation}\label{leib-brack-explct}
{\textbf \{} df_1 \wedge \cdots \wedge df_{n-1}, dg_1 \wedge \cdots \wedge dg_{n-1} {\textbf \}}_\Pi = \sum_{i=1}^{n-1} dg_1 \wedge \cdots \wedge d\{f_1,\ldots,f_{n-1},g_i\} \wedge \cdots \wedge dg_{n-1},
\end{equation}
for all $f_1, \ldots, f_{n-1}, g_1, \ldots, g_{n-1} \in C^\infty(M).$ Let $N \subset M$ be a submanifold, and $0 \neq E \subseteq TM|_N$ be a canonical subbundle.
Since $Ann^1 E$ is generated by elements of the form $df_1 \wedge \cdots \wedge df_{n-1}$,
where $f_1, \ldots, f_{n-1}$ are smooth functions on $M$ with differentials $df_k$ vanish on $E$, it follows from  Equation (\ref{leib-brack-explct}) and the canonicity of $E$ that the sections of the subbundle
$(Ann^1 E) \rightarrow N$ are closed with respect to the bracket defined by (\ref{leib-brack}). Moreover, from Proposition \ref{marsden-ratiu-implication}, the anchor
$\Pi^\sharp$ maps $(Ann^1 E$) to $TN$. Hence, $(Ann^1E) \rightarrow N$ is a Leibniz subalgebroid of $\Lambda^{n-1}T^*M \rightarrow M $.
\[
\xymatrixrowsep{0.5in}
\xymatrixcolsep{0.7in}
\xymatrix{
Ann^1 E \ar@{^{(}->}[r] \ar[d] & \Lambda^{n-1}T^*M \ar[d] \\
N  \ar@{^{(}->}[r] & M
}
\]
We have a different Leibniz algebroid structure on $\Lambda^{n-1}T^*M \rightarrow M $ associated to any Nambu-Poisson manifold of order $n$, given by
Ib$\acute{\text{a}}\tilde{\text{n}}$ez et al \cite{iban-leon-mar-pad}. The Leibniz bracket as defined in \cite{iban-leon-mar-pad} also satisfies Equation (\ref{leib-brack-explct}). Thus, in this case, the bundle $(Ann^1E) \rightarrow N$ is a Leibniz subalgebroid of  $\Lambda^{n-1}T^*M \rightarrow M $.
\end{remark}

Combinding Theorem \ref{marsden-ratiu} and Proposition \ref{marsden-ratiu-implication}, we get the following result which is analogous to Theorem 2.2 of
\cite{fal-zam}.
\begin{prop}\label{E= non zero or zero}
Let $E \subseteq TM|_N $ be a canonical subbundle such that $F:= E \cap TN$ is a regular, integrable distribution on $N$.
\begin{enumerate}
 \item If $E \neq 0$, then $(M, N, E)$ is reducible.
 \item If $E = 0$, then $(M, N, E)$ is reducible if and only if $\Pi^{\sharp}(\Lambda^{n-1} T_p^{*}M) \subseteq T_p N $, for all $p \in N$, that is, if and only if $N$ is a Nambu-Poisson submanifold.
\end{enumerate}
\end{prop}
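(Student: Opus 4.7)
The plan is to read off both parts of the proposition as immediate consequences of Theorem~\ref{marsden-ratiu} together with Proposition~\ref{marsden-ratiu-implication}; no new computation is required, only an unpacking of what the hypotheses of the Marsden-Ratiu criterion become in each of the two cases ($E\neq 0$ and $E=0$). The main ``work'' has already been done in Proposition~\ref{marsden-ratiu-implication}, which tells us that canonicity of $E$ forces the dichotomy $\Pi^\sharp(Ann^1 E)\subseteq TN$ or $E=0$.

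For part (1), I would start by invoking Proposition~\ref{marsden-ratiu-implication}. Since $E$ is canonical and $E\neq 0$, it yields $\Pi^\sharp(Ann^1 E)\subseteq TN$. Trivially $TN\subseteq TN+E$, so
\[
\Pi^\sharp(Ann^1 E)\subseteq TN+E,
\]
which is exactly the sufficient condition of Theorem~\ref{marsden-ratiu}. Since the remaining hypotheses (canonicity of $E$, regularity and integrability of $F=E\cap TN$) are already part of the setup of subsection~\ref{subsec-m-r}, Theorem~\ref{marsden-ratiu} applies and $(M,N,E)$ is reducible.

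For part (2), I would specialize the general criterion to $E=0$. In this case $E^0=T^\ast M|_N$, so $Ann^1 E=\Lambda^{n-1}T^\ast M|_N$; moreover $F=E\cap TN=0$, so the foliation $\mathcal{F}$ is trivial and $\underline N = N$. The condition $\Pi^\sharp(Ann^1 E)\subseteq TN+E$ of Theorem~\ref{marsden-ratiu} then reads
\[
\Pi^\sharp\bigl(\Lambda^{n-1}T^\ast_p M\bigr)\subseteq T_p N \qquad \text{for all } p\in N,
\]
which by the discussion preceding Theorem~\ref{marsden-ratiu} is precisely the pointwise characterization of $N$ being a Nambu-Poisson submanifold. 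Theorem~\ref{marsden-ratiu} then gives the stated equivalence.

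There is essentially no obstacle here; the only points to be careful about are (i) verifying that when $E=0$ the annihilator $Ann^1 E$ is indeed all of $\Lambda^{n-1}T^\ast M|_N$ (which follows directly from the definition $Ann^1 E_p=\{\eta : i_v\eta=0\ \forall v\in E_p\}$), and (ii) checking that the canonicity hypothesis in Theorem~\ref{marsden-ratiu} is vacuous when $E=0$, so that the only nontrivial input in part (2) is the Marsden-Ratiu condition itself.
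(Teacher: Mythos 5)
Your proposal is correct and follows exactly the paper's (implicit) argument: the paper states the proposition as an immediate consequence of combining Theorem~\ref{marsden-ratiu} with Proposition~\ref{marsden-ratiu-implication}, which is precisely what you do in both cases. Your unpacking of the $E=0$ case (that $Ann^1 E$ becomes all of $\Lambda^{n-1}T^*M|_N$ and the Marsden--Ratiu condition reduces to the Nambu--Poisson submanifold criterion) is the intended reading and is accurate.
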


\begin{remark}
It follows from Proposition \ref{E= non zero or zero} that the triple $(M, N, E = 0)$ is reducible if and only if $N$ is a Nambu-Poisson submanifold. If $E'$ is any canonical subbundle such that
$E' \cap TN = 0$, the Nambu-Poisson structures on $N$ induced by $E$ and $E'$ are the same, as $N$ is a Nambu-Poisson submanifold.

\end{remark}

A {\it Nambu ring} is an associative, commutative ring $\mathcal{R}$ with a skew-symmetric $n$-multilinear bracket
\begin{align*}
 \{,\ldots,\} : \underbrace{ \mathcal{R} \times \cdots \times \mathcal{R}}_{n \text{~copies}} \rightarrow \mathcal{R}
\end{align*}
which satisfies the Leibniz rule and the fundamental identity. A subring of $\mathcal{R}$ is called a {\it Nambu subring} if it is itself a Nambu ring under the induced structure.

Let $\mathcal{R}$ be a Nambu ring and $\mathcal{I}$ be an ideal of it. Given a Nambu 
subring $\mathcal{N}$, the quotient $\mathcal{N} / (\mathcal{N} \cap \mathcal{I})$ inherits a Nambu ring structure \cite{mar-ibo}.

\begin{remark}		
Let $E \subset TM|_N$ be a canonical subbundle. Then the induced Nambu-Poisson structure on $C^\infty(N / \mathcal{F}) = C^\infty(M)_E / (C^\infty(M)_E  \cap \mathcal{I})$ given by Proposition
\ref{E= non zero or zero}, is just the quotient Nambu structure as above, where $\mathcal{I}$ is the ideal of smooth functions on $M$ vanishing on $N$.
\end{remark}

\section{Falceto-Zambon reduction}\label{sec-4}
In this section, we study the version of Falceto-Zambon Poisson reduction theorem for Nambu-Poisson manifolds, and subsequently we deduce the algebraic interpretation of our
main result  and reduction of subordinate Nambu structures. Our approaches here closely follow the work of Falceto and Zambon for Poisson manifolds \cite{fal-zam}.

\subsection{Falceto-Zambon reduction}

In the Marsden-Ratiu reduction theorem for Nambu-Poisson manifolds, the induced bracket on $C^\infty(\underline{N})$ is given as follows.
For any $f_1, \ldots, f_{n} \in C^\infty(\underline{N})$, choose any arbitrary extensions $F_1, \ldots, F_n \in C^\infty(M)_E$ of
$f_1 \circ \pi, \ldots, f_n \circ \pi$. The bracket $\{, \ldots,\}_{\underline{N}}$ on $C^\infty(\underline{N})$ is then defined by
\begin{align}\label{defn-m-r-bracket}
 \{f_1, \ldots, f_n \}_{\underline{N}} := i^* \{F_1, \ldots, F_n \}.
\end{align}
Note that the function on the right hand side of the above expression is in $C^\infty(N)_F$. To prove that the above bracket is well defined, one
uses the fact that given any two extensions $F_n$ and $F_n'$ of $f_n \circ \pi$, the differential $d (F_n - F_n')$ annihilate both $TN$ and $E$, 
thus, annihilate $TN + E$. On the other hand, $$\Pi^\sharp (dF_1 \wedge \cdots \wedge dF_{n-1}) \in \Pi^\sharp (Ann^1 E) \subset TN + E.$$
Thus, it follows that the bracket is independent of the chosen extensions. This independence is even valid if there is a subbundle
$D \subset TM|_N$ such that $F \subseteq D \subseteq E$ and satisfying $\Pi^\sharp(Ann^1E) \subset TN + D$. 
To verify the fundamental identity of the reduced bracket, one observes that the canonicity
of $E$ may be weakend. More precisely, we only need the fact that if $F_1, \ldots, F_n \in C^\infty(M)_E$, their bracket $\{F_1, \ldots, F_n\}$ is
in $C^\infty(M)_D$. One can also improve the reduction by adding a multiplicative subalgebra $\mathcal{B} \subset C^\infty(M)_E$
such that the restriction map $i^* : \mathcal{B} \rightarrow C^\infty(N)_F$ is surjective.
 
With the above observations, we have the following Nambu-Poisson version of Falceto-Zambon reduction theorem (compare with Theorem 3.1 \cite{fal-zam}).
\begin{thm}\label{marsden-ratiu-improve}
 Let $(M, \{, \ldots,\})$ be a Nambu-Poisson manifold with associated Nambu tensor $\Pi$, and $N \subset M$ be a submanifold. Let $E \subset TM|_N $ be a subbundle
(may not be canonical) such that $F := E \cap TN $  is a regular, integrable distribution. Assume that $D$ is a subbundle of $TM|_N$ satisfying 
$F \subseteq D \subseteq E $ and
\begin{equation}\label{marsden-ratiu-improve-1}
 \Pi^{\sharp} (\text{Ann}^1 E) \subseteq TN + D .
\end{equation}
Moreover, let $\mathcal{B} \subseteq C^\infty(M)_E$ be a multiplicative subalgebra such that the restriction map $i^* : \mathcal{B} \rightarrow C^\infty(N)_F$
is surjective and
\begin{equation}\label{marsden-ratiu-improve-2}
 \{\mathcal{B}, \ldots, \mathcal{B}\} \subseteq C^\infty(M)_D
\end{equation}
holds. Then $(M, N, E)$ is reducible.
\end{thm}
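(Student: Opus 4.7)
The plan is to adapt, mutatis mutandis, the Poisson argument of Falceto--Zambon, defining the candidate bracket on $C^\infty(\underline{N})$ by
\begin{equation*}
  \{f_1,\ldots,f_n\}_{\underline{N}} := i^*\{F_1,\ldots,F_n\},
\end{equation*}
where $F_k\in\mathcal{B}$ is any extension of $f_k\circ\pi$ (such extensions exist by the surjectivity of $i^*\colon\mathcal{B}\to C^\infty(N)_F$). I would then verify in turn: (a) the right-hand side lies in $C^\infty(N)_F$ and so descends along $\pi$; (b) it is independent of the choice of extensions; (c) it is skew, satisfies the Leibniz rule, and satisfies the fundamental identity. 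Step (a) is immediate from \ref{marsden-ratiu-improve-2}: $\{F_1,\ldots,F_n\}\in C^\infty(M)_D$ and $F\subseteq D$, so the restriction to $N$ has differential annihilating $F$, and hence descends through the submersion $\pi\colon N\to\underline{N}$. Step (b) proceeds by replacing one $F_n$ by a second $\mathcal{B}$-extension $F_n'$: the difference vanishes on $N$ and lies in $C^\infty(M)_E$, so $d(F_n-F_n')$ annihilates both $TN$ and $E$, hence $TN+D$; meanwhile $\Pi^\sharp(dF_1\wedge\cdots\wedge dF_{n-1})$ lies in $\Pi^\sharp(\text{Ann}^1 E)\subseteq TN+D$ along $N$ by \ref{marsden-ratiu-improve-1}, so the pairing vanishes on $N$. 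Skew-symmetry of $\{,\ldots,\}$ upgrades this to independence in every slot.

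Skew-symmetry of the reduced bracket is immediate, and the Leibniz rule follows from the Leibniz rule for $\{,\ldots,\}$ together with closure of $\mathcal{B}$ under products. The main work is the fundamental identity. Given $f_i,g_j\in C^\infty(\underline{N})$ with $\mathcal{B}$-extensions $F_i,G_j$, pick (again by surjectivity) an $H\in\mathcal{B}$ extending $\{f_1,\ldots,f_n\}_{\underline{N}}\circ\pi=i^*\{F_1,\ldots,F_n\}$. The difference $\Psi:=H-\{F_1,\ldots,F_n\}$ vanishes on $N$ and satisfies $d\Psi\in D^0$ (as $dH\in E^0\subseteq D^0$ and, by \ref{marsden-ratiu-improve-2}, $d\{F_1,\ldots,F_n\}\in D^0$), hence $d\Psi\in(TN+D)^0$. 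Contracting with $\Pi^\sharp(dG_1\wedge\cdots\wedge dG_{n-1})\in TN+D$ shows $\{G_1,\ldots,G_{n-1},\Psi\}|_N=0$, so inside $i^*$ we may replace $H$ by $\{F_1,\ldots,F_n\}$. The fundamental identity for $\Pi$ on $M$ then expands $\{G_1,\ldots,G_{n-1},\{F_1,\ldots,F_n\}\}$ as the usual sum of nested brackets, and the identical substitution trick --- swapping each $\{G_1,\ldots,G_{n-1},F_k\}\in C^\infty(M)_D$ for a $\mathcal{B}$-extension of its restriction, with the leftover terms killed by pairing $\Pi^\sharp(dF_1\wedge\cdots\widehat{dF_k}\cdots\wedge dF_n)\in TN+D$ against the annihilator of $TN+D$ --- rewrites the result as the right-hand side of the fundamental identity for $\{,\ldots,\}_{\underline{N}}$.

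The principal obstacle is exactly these two substitution steps: the inner brackets live only in $C^\infty(M)_D$, not in $\mathcal{B}\subseteq C^\infty(M)_E$, so the definition of $\{,\ldots,\}_{\underline{N}}$ cannot be applied directly to them. The weakened hypotheses \ref{marsden-ratiu-improve-1} and \ref{marsden-ratiu-improve-2} are precisely what is needed for the pairing between $\Pi^\sharp(\text{Ann}^1 E)\subseteq TN+D$ and the annihilator $(TN+D)^0$ to render each discrepancy invisible after restricting to $N$; once this bookkeeping is in place the rest is a direct transcription of the Poisson proof.
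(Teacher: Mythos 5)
Your proposal is correct and follows essentially the same route as the paper's proof: the same definition of the reduced bracket, the same well-definedness argument pairing $\Pi^\sharp(\mathrm{Ann}^1 E)\subseteq TN+D$ against $(TN+E)^0$, and the same substitution of $\{F_1,\ldots,F_n\}$ by a $\mathcal{B}$-extension (with the discrepancy killed by the $(TN+D)^0$ pairing) to verify the fundamental identity. If anything, you are slightly more explicit than the paper about the analogous substitution needed on the right-hand side of the fundamental identity, which the paper leaves implicit.
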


\begin{proof}
 Let $f_1, \ldots, f_n \in C^\infty(\underline{N}) = C^\infty(N)_F $ be any functions on $\underline{N}$ and choose their arbitrary extensions $F_1, \ldots, F_n $ in $\mathcal{B}$.
Then the bracket $\{, \ldots,\}_{\underline{N}}$ on $C^\infty(\underline{N})$ is defined by Equation (\ref{defn-m-r-bracket}).
Suppose there is another extension $F_n^\prime \in \mathcal{B} \subseteq C^\infty(M)_E$
 for $f_n$. 
Then the differential of the function
$(F_n - F_n^\prime)$ annihilates $TN + E $.

On the other hand, since each $F_k \in C^\infty(M)_E $, we have $dF_1 \wedge \cdots \wedge dF_{n-1} \in Ann^1 E .$ Therefore,
$$ \Pi^{\sharp}(dF_1 \wedge \cdots \wedge dF_{n-1}) \in \Pi^{\sharp} (Ann^1 E)  \subseteq TN + D  \subseteq TN + E,$$
which implies that
$ i^*\{F_1,\ldots,F_{n-1},F_n - {F_n}^\prime \} = i^* \langle \Pi^{\sharp} (dF_1 \wedge \cdots \wedge dF_{n-1}), d(F_n - {F_n}^\prime)\rangle = 0.$
Thus, by skew-symmetry, the bracket is independent of the chosen extensions of its entries. Hence, the bracket $ \{f_1, \ldots, f_n\}_{\underline{N}} $ is well defined. The property of skew-symmetryness and the Leibniz rule of this bracket follows from that of $\{, \ldots,\}$.

To prove the fundamental identity of this bracket, we need the following observation. Let $\{f_1, \ldots, f_n\}^{\mathcal{B}}_{\underline{N}}$ be any
 extension of $\{f_1, \ldots, f_n\}_{\underline{N}}$ in $\mathcal{B}$. Then from the definition of the bracket $\{f_1, \ldots, f_n\}_{\underline{N}}$, 
it follows that the functions $\{F_1, \ldots, F_n\}$ and $\{f_1, \ldots, f_n\}^{\mathcal{B}}_{\underline{N}}$ agrees on $N$. Thus,
$d \big(\{F_1, \ldots, F_n\}   -  \{f_1, \ldots, f_n\}^{\mathcal{B}}_{\underline{N}}\big) \in (TN)^0.$
Moreover, since the function $\{F_1, \ldots, F_n\}$ is in $C^\infty(M)_D$, and the function $\{f_1, \ldots, f_n\}^{\mathcal{B}}_{\underline{N}}$
is in $\mathcal{B} \subseteq C^\infty(M)_E$, we have
$ d\big(\{F_1, \ldots, F_n\}   -  \{f_1, \ldots, f_n\}^{\mathcal{B}}_{\underline{N}}\big)|_D = 0 .$ Thus, the differential of the function $\big( \{F_1, \ldots, F_n\}   -  \{f_1, \ldots, f_n\}^{\mathcal{B}}_{\underline{N}} \big)$  annihilates both $TN$ and $D$, hence, annihilates
$TN + D$. Thus, it follows from condition (\ref{marsden-ratiu-improve-1}) that the bracket of $(n-1)$ functions of $C^\infty(M)_E$ with the above difference function is zero.
Therefore, for any $g_1, \ldots, g_{n-1} \in C^\infty(\underline{N}),$
\begin{align*}
\{g_1,\ldots,g_{n-1},\{f_1, \ldots, f_n\}_{\underline{N}}\}_{\underline{N}} =&~ i^*\{G_1,\ldots,G_{n-1},\{f_1, \ldots, f_n\}^{\mathcal{B}}_{\underline{N}}\}\\
=&~ i^*\{G_1, \ldots ,G_{n-1},\{F_1, \ldots, F_n\}\} .
\end{align*}
Thus, the fundamental identity of the bracket $\{, \ldots,\}_{\underline{N}}$ follows from that of $\{, \ldots,\}.$
\end{proof}

By choosing smaller $D$, we get better improvement of the reduction problem. Taking $D = F$ and $\mathcal{B} = C^\infty(M)_E$, we get a slight
improvement of the Marsden-Ratiu reduction theorem (cf. Theorem \ref{marsden-ratiu}) for Nambu-Poisson manifolds.

\begin{thm}\label{mr-improve-thm}
 Let $(M, \{, \ldots,\})$ be a Nambu-Poisson manifold with associated Nambu tensor $\Pi$, and $N \subset M$ a submanifold. Let  $E \subset TM|_N $ be a subbundle
(may not be canonical) such that 
 $F := E \cap TN $ is a regular, integrable distribution on $N$ and that
\begin{itemize}
 \item[(i)] if $F_1, \ldots, F_n \in C^\infty(M)_E$ are smooth functions on $M$, then
$$\{F_1 ,\ldots, F_n \} \in C^\infty(M)_F .$$
 \item[(ii)] Moreover,
\begin{align}\label{ann-subset-tn}
 \Pi^{\sharp} (Ann^1 E) \subseteq TN.
\end{align}
\end{itemize}
Then $(M, N, E)$ is reducible.
\end{thm}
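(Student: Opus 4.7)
The plan is to obtain this theorem as a direct specialization of Theorem \ref{marsden-ratiu-improve}, taking $D := F$ and $\mathcal{B} := C^\infty(M)_E$. Under this choice, the inclusion chain $F \subseteq D \subseteq E$ holds trivially, and since $F \subseteq TN$ we have $TN + D = TN$, so the compatibility condition (\ref{marsden-ratiu-improve-1}) reduces to $\Pi^\sharp(\text{Ann}^1 E) \subseteq TN$, which is exactly hypothesis (ii).

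Next I would verify the remaining data on $\mathcal{B}$. That $C^\infty(M)_E$ is a multiplicative subalgebra of $C^\infty(M)$ is a one-line application of the Leibniz rule: if $df|_E = 0$ and $dg|_E = 0$, then $d(fg)|_E = (f\,dg + g\,df)|_E = 0$. The surjectivity of $i^*\colon C^\infty(M)_E \to C^\infty(N)_F$ is the standing extension property recorded in subsection \ref{subsec-m-r} (every function on $N$ with differential vanishing on $F$ extends to a function on $M$ with differential vanishing on $E$). Finally, condition (\ref{marsden-ratiu-improve-2}) becomes $\{C^\infty(M)_E, \ldots, C^\infty(M)_E\} \subseteq C^\infty(M)_F$, which is precisely hypothesis (i) of the statement.

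With all hypotheses of Theorem \ref{marsden-ratiu-improve} in force, reducibility of $(M, N, E)$ follows at once, and the induced bracket on $C^\infty(\underline{N})$ is given by formula (\ref{defn-m-r-bracket}). There is no real obstacle here: the theorem is a repackaging of the more general Falceto-Zambon style result at the smallest admissible choice of $D$. The only conceptual point worth emphasising is that hypothesis (i) is genuinely weaker than canonicity of $E$, since canonicity would demand $\{F_1, \ldots, F_n\} \in C^\infty(M)_E$, whereas here one only asks for membership in the strictly larger space $C^\infty(M)_F$; it is precisely this weakening, together with the weakened annihilator condition $\Pi^\sharp(\text{Ann}^1 E) \subseteq TN$ (rather than $\subseteq TN + E$), that distinguishes the statement from Theorem \ref{marsden-ratiu}.
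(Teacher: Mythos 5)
Your proposal is correct and coincides with the paper's own argument: Theorem \ref{mr-improve-thm} is obtained there precisely by specializing Theorem \ref{marsden-ratiu-improve} to $D=F$ and $\mathcal{B}=C^\infty(M)_E$, with the same identifications of conditions (\ref{marsden-ratiu-improve-1}) and (\ref{marsden-ratiu-improve-2}) with hypotheses (ii) and (i). Your extra verifications (multiplicativity of $C^\infty(M)_E$ and surjectivity of $i^*$ via the standing extension property) are exactly the points the paper leaves implicit.
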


\begin{remark}
In the above theorem, condition $(ii)$ is equivalent to the following: locally there exists a frame of sections $X_i$ of $F$ and for any extensions of them to vector fields on $M$ such that
$$ (\mathcal{L}_{X_i} \Pi)|_N \subseteq E \wedge {\bigwedge}^{n-1}TM|_N .$$
This follows from the formula of the Lie derivative
\begin{align*}
 (\mathcal{L}_{X_i} \Pi)(dF_1, \ldots, dF_n) &= X_i (\Pi (dF_1, \ldots, dF_n)) - \sum_{k=1}^n \Pi (dF_1, \ldots, d (X_i(F_k)), \ldots, dF_n) \\
&= X_i (\{F_1, \ldots, F_n\}) - \sum_{k=1}^n \{    F_1, \ldots, X_i(F_k), \ldots, F_n \}.
\end{align*}
Indeed, if $F_1, \ldots, F_n \in C^\infty(M)_E$ and $\{F_1, \ldots, F_n\} \in C^\infty(M)_F$, the first term of the right hand side vanishes. Moreover, in the right hand side, each term of the summation vanishes on $N$ as $X_i (F_k)|_N = \langle X_i , dF_k \rangle = 0$ and $\Pi^\sharp (Ann^1E) \subset TN$. Therefore, we have
$$ (\mathcal{L}_{X_i} \Pi)(dF_1, \ldots, dF_n)|_N = 0 ,$$
which implies that
$$ (\mathcal{L}_{X_i} \Pi)|_N \subseteq E \wedge {\bigwedge}^{n-1}TM|_N .$$
\end{remark}

\begin{corollary}
 Let $(M, N, E)$ be a triple satisfying conditions of Theorem \ref{mr-improve-thm}, so that it is Nambu-Poisson reducible to $(\underline{N}, \Pi_{\underline{N}})$. Then
$$ \Pi (\widetilde{\pi^*\alpha_1}, \ldots, \widetilde{\pi^*\alpha_{n}} ) \circ i = \Pi_{\underline{N}} (\alpha_1, \ldots, \alpha_n) \circ \pi$$
and
$$ d\pi \circ \Pi^\sharp (\widetilde{\pi^*\alpha_1} \wedge \cdots \wedge \widetilde{\pi^*\alpha_{n-1}}) = (\Pi_{\underline{N}})^\sharp (\alpha_1 \wedge \cdots \wedge \alpha_{n-1}) \circ \pi,$$
for any $\alpha_1, \ldots, \alpha_n \in \Omega^1(\underline{N})$ and any extensions $\widetilde{\pi^*\alpha_1}, \ldots, \widetilde{\pi^*\alpha_{n}}
\in \Omega^1(M)$ of $\pi^*\alpha_1, \ldots, \pi^*\alpha_{n}$ vanishing on $E$.
\end{corollary}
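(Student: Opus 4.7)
The plan is to derive both identities from the very definition (\ref{defn-m-r-bracket}) of the reduced Nambu bracket, reducing the first by $C^\infty$-linearity together with the surjectivity $i^* : C^\infty(M)_E \to C^\infty(N)_F$ to a computation with exact forms, and then obtaining the second by pairing with an arbitrary cotangent vector.

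First I would verify well-posedness: if $\omega$ denotes the difference of two admissible extensions of some $\pi^*\alpha_i$ vanishing on $E$, then at any $p \in N$ one has $\omega_p \in E_p^0 \cap (T_pN)^0$. Placing $\omega$ in the last slot by skew-symmetry, its contribution is $\langle \omega_p, \Pi^\sharp(\widetilde{\pi^*\alpha_1} \wedge \cdots \wedge \widetilde{\pi^*\alpha_{n-1}})(p)\rangle$, where the vector lies in $T_pN$ by condition (ii) of Theorem \ref{mr-improve-thm} and is thus killed by $\omega_p \in (T_pN)^0$. Hence the expressions on both sides depend on the $\alpha_i$ only through their values on $\underline{N}$.

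For the first identity, multilinearity in the $\alpha_i$ over $C^\infty(\underline{N})$ reduces me to the case $\alpha_i = df_i$ with $f_i \in C^\infty(\underline{N})$. Surjectivity of $i^* : C^\infty(M)_E \to C^\infty(N)_F$ yields lifts $F_i \in C^\infty(M)_E$ of $f_i \circ \pi$, and the exact $1$-forms $dF_i$ are admissible extensions of $\pi^*\alpha_i = d(f_i \circ \pi)$. The identity then becomes $\{F_1, \ldots, F_n\} \circ i = \{f_1, \ldots, f_n\}_{\underline{N}} \circ \pi$, which is exactly (\ref{defn-m-r-bracket}).

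For the second identity, I would contract both sides at $p \in N$ with an arbitrary $\gamma \in T_{\pi(p)}^*\underline{N}$, picking $\alpha_n \in \Omega^1(\underline{N})$ with $\alpha_n(\pi(p)) = \gamma$ together with any extension $\widetilde{\pi^*\alpha_n}$. Since the relevant vector lies in $T_pN$, the left pairing equals $\langle \widetilde{\pi^*\alpha_n}(p), \Pi^\sharp(\widetilde{\pi^*\alpha_1} \wedge \cdots \wedge \widetilde{\pi^*\alpha_{n-1}})(p)\rangle = \Pi(\widetilde{\pi^*\alpha_1}, \ldots, \widetilde{\pi^*\alpha_n})(p)$, which the first identity identifies with $\langle \gamma, (\Pi_{\underline{N}})^\sharp(\alpha_1 \wedge \cdots \wedge \alpha_{n-1})(\pi(p))\rangle$. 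Nondegeneracy of the pairing in $\gamma$ then closes the argument. The main obstacle will be the well-posedness step; once that is in hand, both identities collapse to the defining relation of the reduced bracket and to the definition of $\Pi^\sharp$.
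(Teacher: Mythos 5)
Your proof is correct, and since the paper states this corollary without proof, your argument supplies exactly the natural one it implicitly intends: well-posedness via $\omega_p\in (T_pN)^0$ paired against $\Pi^\sharp(Ann^1E)\subseteq TN$, reduction by tensoriality to exact forms where the identity collapses to the defining relation (\ref{defn-m-r-bracket}), and recovery of the bundle-map statement by pairing with an arbitrary covector. No gaps; the only point worth making explicit in a write-up is that condition (ii) is also what makes $d\pi\circ\Pi^\sharp(\widetilde{\pi^*\alpha_1}\wedge\cdots\wedge\widetilde{\pi^*\alpha_{n-1}})$ meaningful in the first place, which you do note.
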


Motivated from the examples of \cite{fal-zam} here we provide examples where the assumptions of Theorem \ref{mr-improve-thm} are satisfied but $E$ is not canonical.
\begin{exam}
 Consider the manifold $M = \mathbb{R}^4$ with the Nambu
structure $\Pi = w \frac{\partial}{\partial x} \wedge \frac{\partial}{\partial y} \wedge \frac{\partial}{\partial z}$  of order $3$. Let $N = \{ w = 0 \}$ be the submanifold of $xyz$-plane and $E = \mathbb{R} \frac{\partial}{\partial w}$. The bundle $E$ is not
canonical. Indeed, the functions $f_1 = x$, $f_2 = y$ and $f_3 = z$ which are in $C^\infty(M)_E$, while their Nambu bracket
$$ \{ x, y, z \} =  \bigg( i_{dx \wedge dy \wedge dz} w \frac{\partial}{\partial x} \wedge \frac{\partial}{\partial y} \wedge \frac{\partial}{\partial z}\bigg) = w ,$$
is not in $C^\infty(M)_E$, but in $C^\infty(M)_F$, since $F = E \cap TN = \{ 0 \}.$ The bundle $E$ also
satisfies condition (\ref{ann-subset-tn}) as $\Pi$ vanishes at points of $N$.
\end{exam}

One can extend the preceding example to the case of Nambu structure of higher order.
\begin{exam}
 Take $M = \mathbb{R}^{n+k}$ with the Nambu structure $\Pi = x_{n+1} \frac{\partial}{\partial x_1} \wedge \cdots \wedge \frac{\partial}{\partial x_n}$
 of order $n$. Take $N = \{ x_{n+1}= 0 \}$ and $E = \mathbb{R} \frac{\partial}{\partial x_{n+1}}$.
The bundle $E$ is not canonical as in the previous example and also satisfies condition (\ref{ann-subset-tn}) as $\Pi$ vanishes at points of $N$.

If we take $N' = \{ x_{n+1} = \cdots = x_{n+k} = 0 \},$ the hypothesis of Theorem \ref{mr-improve-thm} also holds. Therefore, the Nambu structure can
also be reduced.
\end{exam}

Similar to the Marsden-Ratiu version of Nambu-Poisson map reduction and dynamics reduction \cite{iban-leon-mar-dieg}, one can state the Falceto-Zambon version as follows.
The proof of these results are same as the Marsden-Ratiu case.

\begin{prop}\label{n-p-map-red}
(reduction of Nambu-Poisson map) Let the tuples $(M_j, N_j, E_j, D_j, \mathcal{B}_j)$ satisfies the conditions of Theorem \ref{marsden-ratiu-improve}, thus $(M_j, N_j, E_j)$ are
Nambu-Poisson reducible, for $j=1,2$. Let $\phi : M_1 \rightarrow M_2$ be a Nambu-Poisson map such that $\phi (N_1) \subseteq N_2$, $\phi_* (E_1) \subset E_2$,
and $\phi^* \mathcal{B}_2 \subset \mathcal{B}_1$. Then $\phi$ induces a unique Nambu-Poisson map $\hat{\phi} : \underline{N_1} \rightarrow \underline{N_2}$
such that $\pi_2 \circ \phi|_{N_1} = \hat{\phi} \circ \pi_1 .$
\end{prop}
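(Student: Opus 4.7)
The plan is to build $\hat{\phi}$ in two stages: first define it as a smooth map of quotient manifolds, then verify it preserves the reduced brackets by a straightforward chase through the definition \eqref{defn-m-r-bracket}.

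For the construction of $\hat{\phi}$, I would start with $\phi|_{N_1}: N_1 \to N_2$ (which makes sense because $\phi(N_1)\subseteq N_2$). To see this descends to the leaf spaces, observe that $\phi_*(TN_1)\subseteq TN_2$ together with $\phi_*(E_1)\subseteq E_2$ forces $\phi_*(F_1)=\phi_*(E_1\cap TN_1)\subseteq E_2\cap TN_2=F_2$. Since $F_1,F_2$ are the regular integrable distributions giving the foliations $\mathcal{F}_1,\mathcal{F}_2$, this implies that $\phi$ sends each leaf of $\mathcal{F}_1$ into a single leaf of $\mathcal{F}_2$, so setting $\hat{\phi}([p])=[\phi(p)]$ is well-defined and uniquely characterised by $\pi_2\circ\phi|_{N_1}=\hat{\phi}\circ\pi_1$. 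Smoothness of $\hat{\phi}$ follows from smoothness of $\pi_2\circ\phi|_{N_1}$ together with the fact that $\pi_1$ is a surjective submersion (so any map out of $\underline{N_1}$ is smooth iff its composition with $\pi_1$ is).

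To check $\hat{\phi}$ is Nambu-Poisson, fix $f_1,\ldots,f_n\in C^\infty(\underline{N_2})$ and choose extensions $F_1,\ldots,F_n\in\mathcal{B}_2$ of $f_1\circ\pi_2,\ldots,f_n\circ\pi_2$. The hypothesis $\phi^*\mathcal{B}_2\subseteq\mathcal{B}_1$ gives $\phi^*F_i\in\mathcal{B}_1$, and the identity $\pi_2\circ\phi|_{N_1}=\hat{\phi}\circ\pi_1$ shows that each $\phi^*F_i$ is an extension of $(\hat{\phi}^*f_i)\circ\pi_1$. Now the definition \eqref{defn-m-r-bracket} and the fact that $\phi$ is a Nambu-Poisson map give
\begin{align*}
\pi_1^*\{\hat{\phi}^*f_1,\ldots,\hat{\phi}^*f_n\}_{\underline{N_1}}
&= i_1^*\{\phi^*F_1,\ldots,\phi^*F_n\}_{M_1} \\
&= i_1^*\phi^*\{F_1,\ldots,F_n\}_{M_2} \\
&= (\phi|_{N_1})^* i_2^*\{F_1,\ldots,F_n\}_{M_2} \\
&= (\phi|_{N_1})^*\pi_2^*\{f_1,\ldots,f_n\}_{\underline{N_2}} \\
&= \pi_1^*\hat{\phi}^*\{f_1,\ldots,f_n\}_{\underline{N_2}},
\end{align*}
and injectivity of $\pi_1^*$ (which holds because $\pi_1$ is a surjective submersion) yields $\hat{\phi}^*\{f_1,\ldots,f_n\}_{\underline{N_2}}=\{\hat{\phi}^*f_1,\ldots,\hat{\phi}^*f_n\}_{\underline{N_1}}$.

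I do not expect any genuine obstacle here: once the well-definedness of $\hat{\phi}$ on leaves is secured from $\phi_*(E_1)\subseteq E_2$, the rest is a purely formal diagram chase, where the two Marsden--Ratiu-type hypotheses of Theorem \ref{marsden-ratiu-improve} are used only implicitly via the definition of the reduced brackets (they guarantee those brackets exist). The mildest subtlety is checking that $\phi^*F_i$ is admissible as an extension on the $M_1$-side, which is exactly what the three transfer hypotheses $\phi(N_1)\subseteq N_2$, $\phi_*(E_1)\subseteq E_2$ and $\phi^*\mathcal{B}_2\subseteq\mathcal{B}_1$ are designed to ensure.
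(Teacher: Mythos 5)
Your proposal is correct and is precisely the standard argument that the paper itself omits (it only remarks that ``the proof of these results are same as the Marsden--Ratiu case''): descend $\phi|_{N_1}$ to the leaf spaces via $\phi_*(F_1)\subseteq F_2$, then verify the bracket identity by choosing $\phi^*F_i\in\mathcal{B}_1$ as admissible extensions and chasing through the definition of the reduced brackets. No gaps.
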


Let $M$ be a Nambu-Poisson manifold of order $n$. Then a submanifold $N \subset M$ is {\it conserved} for the functions $F_1, \ldots, F_{n-1} \in C^\infty(M)$,
if $X_{F_1, \ldots, F_{n-1}} (x) \in T_xN$, for all $x \in N$.

\begin{thm}\label{n-p-dynamics-red}
(reduction of dynamics) Let the tuple $(M, N, E, D, \mathcal{B})$ satisfies the conditions of Theorem \ref{marsden-ratiu-improve}, thus $(M, N, E)$ is reducible. Let $H_1, \ldots, H_{n-1} \in \mathcal{B} \subset C^\infty(M)_E$
be a family of functions for which the submanifold $N$ is conserved. In addition, assume that the flow $\phi_t$ of $X_{H_1, \ldots, H_{n-1}}$
preserves the subbundle $E$ and that $\phi_t^* \mathcal{B} \subset \mathcal{B}$. Then $\phi_t$ induces Nambu-Poisson diffeomorphisms $\hat{\phi_t}$ on $\underline{N}$
and $\hat{\phi_t}$ is the flow of the Hamiltonian vector field $X_{h_1, \ldots, h_{n-1}}$, where $h_i \in C^\infty(\underline{N})$ are uniquely determined by
$h_i \circ \pi = H_i |_N$, for $i = 1, \ldots, n-1.$ Moreover, the vector fields $(X_{H_1, \ldots, H_{n-1}}) |_N$ and $X_{h_1, \ldots, h_{n-1}}$ are $\pi$-related.
\end{thm}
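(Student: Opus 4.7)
The plan is to reduce this theorem to an application of the Nambu-Poisson map reduction (Proposition \ref{n-p-map-red}) applied to the flow itself, followed by an infinitesimal computation to identify the reduced generator.

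First I would observe that since $X_{H_1,\ldots,H_{n-1}}$ is a Hamiltonian vector field, the fundamental identity \eqref{fund-hamiltonian} implies that its flow $\phi_t$ consists of Nambu-Poisson diffeomorphisms of $M$. Because $N$ is conserved for $H_1,\ldots,H_{n-1}$, the vector field $X_{H_1,\ldots,H_{n-1}}$ is tangent to $N$ everywhere along $N$, so $\phi_t(N)\subseteq N$ and the restriction $\phi_t|_N$ has flow lines entirely in $N$. Combined with the hypothesis $(\phi_t)_\ast E\subseteq E$, this yields $(\phi_t|_N)_\ast F\subseteq F$, so $\phi_t|_N$ sends leaves of $\mathcal{F}$ to leaves of $\mathcal{F}$ and descends to a smooth diffeomorphism $\hat{\phi_t}$ of $\underline{N}$ satisfying $\hat{\phi_t}\circ\pi=\pi\circ\phi_t|_N$.

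Next I would apply Proposition \ref{n-p-map-red} with $(M_1,N_1,E_1,D_1,\mathcal{B}_1)=(M_2,N_2,E_2,D_2,\mathcal{B}_2)=(M,N,E,D,\mathcal{B})$ and the map $\phi=\phi_t$. The four hypotheses of that proposition are exactly: $\phi_t$ is Nambu-Poisson (shown above), $\phi_t(N)\subseteq N$, $(\phi_t)_\ast E\subseteq E$, and $\phi_t^\ast\mathcal{B}\subseteq\mathcal{B}$. The proposition then guarantees that $\hat{\phi_t}$ is a Nambu-Poisson diffeomorphism of $\underline{N}$; uniqueness and the one-parameter group property follow from $\hat{\phi_t}\circ\hat{\phi_s}=\widehat{\phi_{t+s}}$, which transfers automatically across $\pi$.

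The identification of the generator requires a small computation. Let $Y$ denote the vector field on $\underline{N}$ generating the smooth one-parameter family $\hat{\phi_t}$. Pick $f\in C^\infty(\underline{N})$ and an extension $F\in\mathcal{B}$ with $i^\ast F=\pi^\ast f$. By the hypothesis $\phi_t^\ast\mathcal{B}\subseteq\mathcal{B}$, the function $\phi_t^\ast F$ is an extension in $\mathcal{B}$ of $\pi^\ast(\hat{\phi_t}^\ast f)$. Differentiating at $t=0$ and using $\hat{\phi_t}\circ\pi=\pi\circ\phi_t|_N$, one gets
\[
\pi^\ast(Yf)=i^\ast\bigl(X_{H_1,\ldots,H_{n-1}}F\bigr)=i^\ast\{H_1,\ldots,H_{n-1},F\}.
\]
On the other hand, the definition \eqref{defn-m-r-bracket} of the reduced bracket (applied with $H_1,\ldots,H_{n-1},F$ as extensions of $h_1,\ldots,h_{n-1},f$) gives
\[
\pi^\ast\{h_1,\ldots,h_{n-1},f\}_{\underline{N}}=i^\ast\{H_1,\ldots,H_{n-1},F\},
\]
and $X_{h_1,\ldots,h_{n-1}}f$ equals the left-hand side by definition of the Hamiltonian vector field on $\underline{N}$. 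Since $\pi^\ast$ is injective on $C^\infty(\underline{N})$, we conclude $Yf=X_{h_1,\ldots,h_{n-1}}f$ for all $f$, hence $Y=X_{h_1,\ldots,h_{n-1}}$. The same identity $\pi^\ast(Yf)=i^\ast(X_{H_1,\ldots,H_{n-1}}F)$, rewritten as $d\pi\circ X_{H_1,\ldots,H_{n-1}}|_N=X_{h_1,\ldots,h_{n-1}}\circ\pi$, is precisely the asserted $\pi$-relatedness.

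The main obstacle is purely bookkeeping: one must confirm that all the ingredients of Proposition \ref{n-p-map-red} are available for the self-map $\phi_t$ and that the infinitesimal identification of $Y$ with $X_{h_1,\ldots,h_{n-1}}$ genuinely matches the way the reduced bracket was defined in \eqref{defn-m-r-bracket}; the role of the hypothesis $\phi_t^\ast\mathcal{B}\subseteq\mathcal{B}$ is precisely to guarantee that $\phi_t^\ast F$ is an admissible extension, which is what closes the computation.
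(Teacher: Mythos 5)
Your proof is correct, and it follows the standard route that the paper itself intends: the paper gives no proof of this theorem, stating only that ``the proof of these results are same as the Marsden-Ratiu case'' (i.e.\ the argument of Ib\'a\~nez et al.), which is exactly what you reconstruct --- the flow preserves $\Pi$ because $\mathcal{L}_{X_{H_1,\ldots,H_{n-1}}}\Pi=0$ is the fundamental identity, it descends by the map-reduction proposition applied to $\phi_t$ as a self-map, and the generator is identified by differentiating $\pi^*(\hat{\phi_t}^*f)=i^*(\phi_t^*F)$ at $t=0$ and matching against the defining formula (\ref{defn-m-r-bracket}) of the reduced bracket with extensions $H_1,\ldots,H_{n-1},F\in\mathcal{B}$. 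The only point worth making explicit is that $H_i|_N$ indeed lies in $C^\infty(N)_F$ (since $H_i\in C^\infty(M)_E$ and $F\subseteq E$), so that $h_i$ is well defined; otherwise the bookkeeping you flag all checks out.
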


\subsection{Algebraic interpretation of Theorem \ref{marsden-ratiu-improve}}

An algebraic formulation of the Marsden-Ratiu reduction theorem for Nambu-Poisson manifolds (cf. Theorem \ref{marsden-ratiu}) is given as follows.
Let $\mathcal{I}$ be an ideal of a Nambu ring $\mathcal{R}$ and $\mathcal{B} \subset \mathcal{R}$ be a Nambu subring such that
\begin{align*}
 \{\mathcal{B},\ldots,\mathcal{B}, \mathcal{B} \cap \mathcal{I}\} \subset \mathcal{I}.
\end{align*}
Then there is an induced Nambu ring structure on 
$\mathcal{B}/({\mathcal{B} \cap \mathcal{I}}).$

In the next, we will give the algebraic interpretation of Theorem \ref{marsden-ratiu-improve} which is similar to Proposition A.1 of \cite{fal-zam}.
\begin{thm}\label{algebraic-intrprttn}
 Let $\mathcal{R}$ be a Nambu ring and $\mathcal{I}$ be an ideal of $\mathcal{R}$. Let $\mathcal{B} \subset \mathcal{D}$ be multiplicative subalgebras
of $\mathcal{R}$ having the same images under the projection map $\mathcal{R} \rightarrow   \mathcal{R}/ \mathcal{I}$. Moreover, suppose that
\begin{equation}\label{marsden-ratiu-improve-alg-1}
 \{\mathcal{B},\ldots,\mathcal{B}, \mathcal{D} \cap \mathcal{I}\} \subset \mathcal{I}
\end{equation}
and
\begin{equation}\label{marsden-ratiu-improve-alg-2}
  \{\mathcal{B}, \ldots, \mathcal{B}\} \subset \mathcal{D} .
\end{equation}
Then $\mathcal{B}/(\mathcal{B} \cap \mathcal{I})$ inherits a Nambu ring structure with its bracket is determined by the following diagram
\[
\xymatrixrowsep{0.5in}
\xymatrixcolsep{0.7in}
\xymatrix{
{\mathcal{B} \times \cdots \times \mathcal{B}} \ar[r]^{\{, \ldots ,\}} \ar[d] & \mathcal{D} \ar[d] \\
\frac{\mathcal{B}}{\mathcal{B} \cap \mathcal{I}} \times \cdots \times  \frac{\mathcal{B}}{\mathcal{B} \cap \mathcal{I}} \ar[r] & \frac{\mathcal{B}}{\mathcal{B} \cap \mathcal{I}} = \frac{\mathcal{D}}{\mathcal{D} \cap \mathcal{I}}.
}
\]
\end{thm}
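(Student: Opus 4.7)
The plan is to follow the pattern of the proof of Theorem \ref{marsden-ratiu-improve} in this purely algebraic setting. The first observation is that the hypothesis that $\mathcal{B}$ and $\mathcal{D}$ have the same image under $\mathcal{R} \to \mathcal{R}/\mathcal{I}$ yields a canonical ring isomorphism $\mathcal{B}/(\mathcal{B}\cap\mathcal{I}) \xrightarrow{\sim} \mathcal{D}/(\mathcal{D}\cap\mathcal{I})$ induced by the inclusion $\mathcal{B} \hookrightarrow \mathcal{D}$: surjectivity follows from writing any $d \in \mathcal{D}$ as $b+i$ with $b \in \mathcal{B}$ and $i \in \mathcal{I}$, so that $d-b \in \mathcal{D}\cap\mathcal{I}$; injectivity is trivial. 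This identification is exactly the one in the bottom-right corner of the diagram. The reduced bracket is then defined as follows: for $b_1,\ldots,b_n \in \mathcal{B}$, condition (\ref{marsden-ratiu-improve-alg-2}) guarantees $\{b_1,\ldots,b_n\} \in \mathcal{D}$, and I set
\[
\{[b_1],\ldots,[b_n]\}_{\mathrm{red}} := [\{b_1,\ldots,b_n\}] \in \mathcal{D}/(\mathcal{D}\cap\mathcal{I}) = \mathcal{B}/(\mathcal{B}\cap\mathcal{I}).
\]

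The next step is to verify well-definedness. If $b_n$ is replaced by $b_n + a$ with $a \in \mathcal{B}\cap\mathcal{I}$, then the element $\{b_1,\ldots,b_{n-1},a\}$ lies in $\mathcal{I}$ by (\ref{marsden-ratiu-improve-alg-1}) and in $\mathcal{D}$ by (\ref{marsden-ratiu-improve-alg-2}), hence in $\mathcal{D}\cap\mathcal{I}$, so it is killed in the quotient. Skew-symmetry of the bracket on $\mathcal{R}$ transports the same argument to every slot. Skew-symmetry and the Leibniz rule of $\{,\ldots,\}_{\mathrm{red}}$ then descend immediately from the corresponding properties of $\{,\ldots,\}$ on $\mathcal{R}$, because the defining assignment $(b_1,\ldots,b_n) \mapsto [\{b_1,\ldots,b_n\}]$ is the composition of the $\mathcal{R}$-bracket with a ring homomorphism.

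The only nontrivial axiom is the fundamental identity, and this is where I expect the main obstacle. The difficulty is that in the nested expression $\{[g_1],\ldots,[g_{n-1}],\{[f_1],\ldots,[f_n]\}_{\mathrm{red}}\}_{\mathrm{red}}$ the inner reduced bracket is naturally represented by an element of $\mathcal{D}$, not of $\mathcal{B}$, so the outer reduced bracket cannot be applied on the nose. I would resolve this by lifting. Given representatives $g_1,\ldots,g_{n-1},f_1,\ldots,f_n \in \mathcal{B}$, set $\theta := \{f_1,\ldots,f_n\} \in \mathcal{D}$ and use the equal-images hypothesis to choose $\Phi \in \mathcal{B}$ with $\theta - \Phi \in \mathcal{D}\cap\mathcal{I}$. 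Then (\ref{marsden-ratiu-improve-alg-1}) forces
\[
\{g_1,\ldots,g_{n-1},\theta\} \equiv \{g_1,\ldots,g_{n-1},\Phi\} \pmod{\mathcal{I}},
\]
and by definition the right-hand side projects onto $\{[g_1],\ldots,[g_{n-1}],\{[f_1],\ldots,[f_n]\}_{\mathrm{red}}\}_{\mathrm{red}}$. Applying the fundamental identity for $\{,\ldots,\}$ on $\mathcal{R}$ to $\{g_1,\ldots,g_{n-1},\theta\}$, performing the analogous lift $\{g_1,\ldots,g_{n-1},f_k\} \rightsquigarrow \Psi_k \in \mathcal{B}$ in each summand (again permitted by equal images, with the discrepancy absorbed in $\mathcal{I}$ via (\ref{marsden-ratiu-improve-alg-1})), and projecting to the quotient produces exactly the fundamental identity for $\{,\ldots,\}_{\mathrm{red}}$. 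The whole argument is the algebraic shadow of Theorem \ref{marsden-ratiu-improve}, with $\mathcal{B}$ playing the role of $C^{\infty}(M)_{E}$, $\mathcal{D}$ that of $C^{\infty}(M)_{D}$, and $\mathcal{I}$ the ideal of smooth functions vanishing on $N$.
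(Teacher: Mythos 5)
Your proof is correct and takes essentially the same route as the paper's: define the bracket via condition (\ref{marsden-ratiu-improve-alg-2}), check well-definedness via (\ref{marsden-ratiu-improve-alg-1}), and handle the fundamental identity by lifting the inner bracket from $\mathcal{D}$ back to a representative in $\mathcal{B}$ (your $\Phi$, the paper's $\widetilde{\{f_1,\ldots,f_n\}}$), with the discrepancy in $\mathcal{D}\cap\mathcal{I}$ absorbed by (\ref{marsden-ratiu-improve-alg-1}). Your version is slightly more explicit about the identification $\mathcal{B}/(\mathcal{B}\cap\mathcal{I})\cong\mathcal{D}/(\mathcal{D}\cap\mathcal{I})$ and about lifting the summands on the right-hand side of the fundamental identity, but the argument is the same.
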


\begin{proof}
 The bracket on $\mathcal{B}/(\mathcal{B} \cap \mathcal{I}) $ is well defined because of conditions (\ref{marsden-ratiu-improve-alg-1}) and 
(\ref{marsden-ratiu-improve-alg-2}). The induced bracket on $\mathcal{B}/(\mathcal{B} \cap \mathcal{I})$ is of course skew-symmetric and satisfies
the Leibniz rule as so does the bracket on $\mathcal{R}$. To check the fundamental identity of the bracket, consider any
$g_1, \ldots, g_{n-1}, f_1, \ldots, f_n \in \mathcal{B}/(\mathcal{B} \cap \mathcal{I})$ and arbitrary representations
$\widetilde{g_1}, \ldots, \widetilde{g_{n-1}}, \widetilde{f_1}, \ldots, \widetilde{f_n} \in \mathcal{B}$ of them. Then 
$\widetilde{ \{f_1, \ldots, f_n\}  }$ and $\{ \widetilde{f_1}, \ldots, \widetilde{f_n}  \}$ represents the same element, therefore, their difference
$ \widetilde{ \{f_1, \ldots, f_n\}  }  -   \{ \widetilde{f_1}, \ldots, \widetilde{f_n}  \}  $ lies in $\mathcal{B} \cap \mathcal{I}   \subseteq \mathcal{D} \cap \mathcal{I}$.

Thus, we have
\begin{align*}
 \{g_1, \ldots, g_{n-1} , \{f_1, \ldots, f_n \} \} =&~ \{ \widetilde{g_1}, \ldots, \widetilde{g_{n-1}}, \widetilde {\{f_1, \ldots, f_n \}}\}   \quad (\text{mod ~} \mathcal{D} \cap \mathcal{I}) \\
=&~ \{ \widetilde{g_1}, \ldots, \widetilde{g_{n-1}}, \{ \widetilde{f_1}, \ldots, \widetilde{f_n}  \} \}  \quad (\text{mod ~} \mathcal{D} \cap \mathcal{I}).
\end{align*}
Hence, the bracket on $\mathcal{B}/(\mathcal{B} \cap \mathcal{I})$ satisfies the fundamental identity.
\end{proof}

\begin{remark}
 Theorem \ref{marsden-ratiu-improve} can be recovered from the above algebraic formulation by taking $\mathcal{R} = C^\infty(M)$, 
$\mathcal{I} = \{ f \in C^\infty(M)|~ f|_N \equiv 0 \},$ $\mathcal{B} = \mathcal{B}$ and $\mathcal{D} = C^\infty(M)_D$. In this case, conditions 
(\ref{marsden-ratiu-improve-1}) and (\ref{marsden-ratiu-improve-2}) become conditions (\ref{marsden-ratiu-improve-alg-1}) and
(\ref{marsden-ratiu-improve-alg-2}), respectively.
\end{remark}

\subsection{Reduction of subordinate Nambu structures}

Let the tuple $(M, N, E, D, \mathcal{B})$ satisfies the conditions of
Theorem \ref{marsden-ratiu-improve}, that is, there is a subbundle $D \subset TM|_N$ satisfying $F \subseteq D \subseteq E $ and 
$$ \Pi^{\sharp} (Ann^1 E) \subseteq TN + D .$$
Moreover, $\mathcal{B} \subseteq C^\infty(M)_E$ is a multiplicative subalgebra such that the map $i^* : \mathcal{B} \rightarrow C^\infty(N)_F$
is surjective and
$ \{\mathcal{B}, \ldots, \mathcal{B}\} \subseteq C^\infty(M)_D . $
Let $F_1,\ldots,F_{k}$ $(k \leqslant n-2)$ be any fixed functions on $M$. Then there is an induced
Nambu-Poisson structure of order $n-k$ on $M$, called the {\it subordinate} Nambu structure with subordinate functions $F_1,\ldots,F_{k}$ and is defined by
$$ \{f_1,\ldots, f_{n-k}\}_{F_1 \cdots F_{k}} = \{F_1,\ldots,F_{k},f_1,\ldots, f_{n-k}\} ,$$
for any $f_1, \ldots, f_{n-k} \in C^\infty(M)$. The Nambu tensor $\Pi_{F_1 \cdots F_{k}}$ of this subordinate Nambu structure is given by
$$(\Pi_{F_1 \cdots F_{k}})^{\sharp} (df_1 \wedge \cdots \wedge df_{n-k-1}) = \Pi^{\sharp} (dF_1 \wedge \cdots \wedge dF_k \wedge df_1 \wedge \cdots \wedge df_{n-k-1}).$$ Observe that
$$ (\Pi_{F_1 \cdots F_{k}})^{\sharp} (Ann^1_k E) \subset \Pi^{\sharp} (\text{Ann}^1 E) \subset TN + D ,$$
where $(Ann^1_k E)_p = \{ \mu \in \Lambda^{n-k-1} T_x^*M |~ i_v \mu = 0 , \forall v \in E_p \},$ for $p \in N,$
if the differentials $dF_1, \ldots, dF_k$ are vanishing on $E$.
Thus, condition (\ref{marsden-ratiu-improve-1}) holds for this subordinate Nambu structure if $F_1, \ldots, F_k \in C^\infty(M)_E$ . If $F_1,\ldots, F_k$ are also in $\mathcal{B}$, that is, they lie in the same multiplicative subalgebra, then condition (\ref{marsden-ratiu-improve-2})
also holds for the subordinate Nambu structure. In that case, the subordinate Nambu structure $\{, \ldots,\}_{F_1 \cdots F_{k}}$ is also reducible.

Thus we have the following result.
\begin{prop}\label{reduction-subordinate}
 Let the assumptions of Theorem \ref{marsden-ratiu-improve} hold and let $F_1, \ldots, F_k$ be any fixed functions on $M$. If $F_1, \ldots, F_k$ are in  $\mathcal{B}$, then
the subordinate Nambu structure $\{, \ldots,\}_{F_1 \cdots F_{k}}$ is reducible.
\end{prop}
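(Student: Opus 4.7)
The plan is to show that $(M, \Pi_{F_1 \cdots F_k})$, together with the same submanifold $N$, subbundles $F \subseteq D \subseteq E$, and multiplicative subalgebra $\mathcal{B}$, satisfies all the hypotheses of Theorem \ref{marsden-ratiu-improve}; the reducibility of the subordinate structure then follows at once. Note that the geometric data $(N, E, D, F)$ are unchanged, so $F = E \cap TN$ remains a regular integrable distribution and the inclusions $F \subseteq D \subseteq E$ still hold. Thus only the two analytic conditions (\ref{marsden-ratiu-improve-1}) and (\ref{marsden-ratiu-improve-2}) need to be re-verified with $\Pi$ replaced by $\Pi_{F_1 \cdots F_k}$ (which has order $n - k \geq 2$ since $k \leq n-2$).

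To establish the analogue of (\ref{marsden-ratiu-improve-1}), I take an arbitrary $\mu \in \text{Ann}^1_k E$, locally a wedge of $n-k-1$ covectors lying in $E^0$. The key observation, which is the whole point of the assumption $F_i \in \mathcal{B}$, is that since $\mathcal{B} \subseteq C^\infty(M)_E$ the covectors $dF_1, \ldots, dF_k$ also annihilate $E$, so $dF_1 \wedge \cdots \wedge dF_k \wedge \mu$ belongs to $\text{Ann}^1 E \subset \Lambda^{n-1}T^*M$. Then the very definition
\begin{equation*}
(\Pi_{F_1 \cdots F_k})^{\sharp}(\mu) \;=\; \Pi^{\sharp}(dF_1 \wedge \cdots \wedge dF_k \wedge \mu)
\end{equation*}
places this vector in $\Pi^{\sharp}(\text{Ann}^1 E) \subseteq TN + D$, as required.

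To establish the analogue of (\ref{marsden-ratiu-improve-2}), I use the defining identity of the subordinate bracket: for any $g_1, \ldots, g_{n-k} \in \mathcal{B}$,
\begin{equation*}
\{g_1, \ldots, g_{n-k}\}_{F_1 \cdots F_k} \;=\; \{F_1, \ldots, F_k, g_1, \ldots, g_{n-k}\}.
\end{equation*}
Since $F_1, \ldots, F_k$ also lie in $\mathcal{B}$ by hypothesis, the right-hand side is the $n$-ary Nambu bracket of elements of $\mathcal{B}$, and hence lies in $C^\infty(M)_D$ by the original assumption $\{\mathcal{B}, \ldots, \mathcal{B}\} \subseteq C^\infty(M)_D$. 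The surjectivity of $i^* : \mathcal{B} \to C^\infty(N)_F$ is unaffected by switching to the subordinate structure.

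There is no substantive obstacle here; the proof is essentially a bookkeeping exercise propagating the two hypotheses of Theorem \ref{marsden-ratiu-improve} through the definition of the subordinate tensor. The only point worth flagging is the necessity of the stronger assumption $F_i \in \mathcal{B}$ (rather than merely $F_i \in C^\infty(M)_E$): without it, the chain of membership in the bracket computation above breaks, and the subordinate bracket of $\mathcal{B}$-functions need not land in $C^\infty(M)_D$.
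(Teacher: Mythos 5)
Your proof is correct and follows essentially the same route as the paper: both arguments verify that the data $(M,N,E,D,\mathcal{B})$ satisfy the two hypotheses of Theorem \ref{marsden-ratiu-improve} for the subordinate tensor, using $dF_i|_E=0$ to get $(\Pi_{F_1\cdots F_k})^\sharp(\mathrm{Ann}^1_k E)\subset \Pi^\sharp(\mathrm{Ann}^1 E)\subseteq TN+D$, and using $F_i\in\mathcal{B}$ to reduce condition (\ref{marsden-ratiu-improve-2}) to the original one for the $n$-ary bracket. No gaps.
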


\subsection{Application}\label{subsec-4.5}

In this subsection, we give an application of Theorem \ref{marsden-ratiu-improve} where the subbundle $D \subset TM|_N$ is the restriction of some suitable integrable
distribution on $M$. We recall the following definition from \cite{fal-zam}.

\begin{defn}
 Let $M$ be a manifold, $N \subset M$ a submanifold and $i: N \hookrightarrow M$ the inclusion. Let $E \subseteq TM|_N$ be a subbundle such that 
$F : = E \cap TN$ is a regular, integrable distribution on $N$. If $\theta_D$ is an integrable distribution on $M$ with $F \subseteq D : = \theta_D |_N \subseteq E$, then $\theta_D$ is said to be {\it compatible} with $E$ if the restriction map
$$ i^* : C^\infty(M)_E \cap C^\infty(M)_{\theta_D}  \rightarrow C^\infty(N)_F $$
is surjective.
\end{defn}

In order to provide some examples we need the following proposition which is a generalization of Proposition 4.2 of \cite{fal-zam}.

\begin{prop}\label{last-prop}
 Let $(M, \{,\ldots,\})$ be a Nambu-Poisson manifold with associated Nambu tensor $\Pi$, $N \subset M$ a submanifold, and $i : N \hookrightarrow M$ the inclusion.
Let $E , ~\theta_D $ as in the above
and $\theta_D$ is compatible with $E$. If
\begin{align}\label{ann-subset-tn+d}
\Pi^{\sharp} (Ann^1 E) \subseteq TN + D
\end{align}
and for any section $X \in \Gamma(\theta_D)$,
\begin{align}\label{last-eqn}
 (\mathcal{L}_X \Pi)|_N \subseteq E \wedge {\bigwedge}^{n-1}TM|_N ,
\end{align}
then $(M, N, E)$ is reducible.
\end{prop}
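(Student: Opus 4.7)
The plan is to apply Theorem \ref{marsden-ratiu-improve} directly, with the subbundle $D := \theta_D|_N$ and the multiplicative subalgebra $\mathcal{B} := C^\infty(M)_E \cap C^\infty(M)_{\theta_D}$. The inclusion $F \subseteq D \subseteq E$ is given, and condition (\ref{marsden-ratiu-improve-1}) is exactly hypothesis (\ref{ann-subset-tn+d}). The compatibility of $\theta_D$ with $E$ is, by definition, precisely the statement that $i^* : \mathcal{B} \to C^\infty(N)_F$ is surjective, and $\mathcal{B}$ is multiplicative as the intersection of two multiplicatively closed subalgebras. Thus the only remaining requirement is condition (\ref{marsden-ratiu-improve-2}), $\{\mathcal{B}, \ldots, \mathcal{B}\} \subseteq C^\infty(M)_D$, and this is where the Lie-derivative hypothesis (\ref{last-eqn}) together with the integrability of $\theta_D$ must be used. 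This is the main (and essentially only) obstacle.

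To verify condition (\ref{marsden-ratiu-improve-2}), I would fix $F_1, \ldots, F_n \in \mathcal{B}$ and show that for every section $X \in \Gamma(\theta_D)$ the function $X\{F_1, \ldots, F_n\}$ vanishes on $N$. Since every tangent vector in $D$ extends to a local section of $\theta_D$, this yields $d\{F_1, \ldots, F_n\}|_D = 0$, hence $\{F_1, \ldots, F_n\} \in C^\infty(M)_D$. The computational core is the standard identity
$$X\{F_1, \ldots, F_n\} = (\mathcal{L}_X \Pi)(dF_1, \ldots, dF_n) + \sum_{k=1}^n \{F_1, \ldots, X(F_k), \ldots, F_n\},$$
which splits the task in two. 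Because each $F_k$ lies in $C^\infty(M)_{\theta_D}$ and $X \in \Gamma(\theta_D)$, one has $X(F_k) = \langle X, dF_k \rangle \equiv 0$ on all of $M$, so the sum vanishes identically.

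For the first term, hypothesis (\ref{last-eqn}) expresses $(\mathcal{L}_X \Pi)|_N$ as a sum $\sum_i Y_i \wedge \Xi_i$ with $Y_i \in \Gamma(E)$ and $\Xi_i \in \Gamma(\bigwedge^{n-1} TM|_N)$; contracting with $dF_1 \wedge \cdots \wedge dF_n$ then kills every summand, since each $dF_k$ annihilates $E$ because $F_k \in C^\infty(M)_E$. Hence $X\{F_1, \ldots, F_n\}|_N = 0$, which is precisely the required condition (\ref{marsden-ratiu-improve-2}). With all hypotheses of Theorem \ref{marsden-ratiu-improve} verified, the triple $(M, N, E)$ is reducible. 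The only delicate point worth double-checking in writing this out is that the Lie-derivative identity applies uniformly in a neighbourhood of $N$, which is clear once $X$ is chosen as a smooth extension of a given vector in $D$ to a section of $\theta_D$.
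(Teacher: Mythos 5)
Your proposal is correct and follows essentially the same route as the paper's own proof: take $\mathcal{B} = C^\infty(M)_E \cap C^\infty(M)_{\theta_D}$, reduce everything to verifying $\{\mathcal{B},\ldots,\mathcal{B}\} \subseteq C^\infty(M)_D$, and obtain that from the Lie-derivative identity, with the sum killed by $X(F_k)=0$ and the $(\mathcal{L}_X\Pi)$-term killed by hypothesis (\ref{last-eqn}) together with $dF_k|_E = 0$. Your spelled-out contraction argument for why $(\mathcal{L}_X\Pi)(dF_1,\ldots,dF_n)|_N = 0$ is a slightly more explicit version of a step the paper merely asserts.
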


\begin{proof}
 Take $\mathcal{B} = C^\infty(M)_E \cap C^\infty(M)_{\theta_D}$ in Theorem \ref{marsden-ratiu-improve} . As $\theta_D$ is compatible with $E$, the map
$i^* : \mathcal{B} \rightarrow C^\infty(N)_F$ is surjective. Moreover, the condition
(\ref{marsden-ratiu-improve-1})
is given. To show that $\mathcal{B}$ satisfies condition (\ref{marsden-ratiu-improve-2}), that is,
$\{ \mathcal{B}, \ldots, \mathcal{B} \} \subseteq C^\infty(M)_D ,$
take any $F_1, \ldots, F_n \in \mathcal{B} = C^\infty(M)_E \cap C^\infty(M)_{\theta_D}$ and $X \in \Gamma(\theta_D)$.

We have the formula for the Lie derivative
\begin{align*}
 (\mathcal{L}_X \Pi)(dF_1, \ldots, dF_n) = X (\{F_1, \ldots, F_n\}) - \sum_{k=1}^n \{ F_1, \ldots, X(F_k), \ldots, F_n \}.
\end{align*}
Note that, if $(\mathcal{L}_X \Pi)|_N \subset E \wedge {\bigwedge}^{n-1}TM|_N$, then the left hand side of the above expression vanishes on $N$. Moreover, in the right hand side, each term of the summation vanishes, as
$X(F_k) = \langle X , dF_k \rangle = 0$ \big(since $X \in \Gamma (\theta_D)$ and $F_k \in C^\infty(M)_{\theta_D}$\big).
Thus, we have
$(X \{ F_1, \ldots, F_n \})|_N \equiv 0,$
that is $\langle X , d \{F_1, \ldots, F_n \} \rangle |_N = 0$. Since $X|_N \in D = \theta_D |_N$, the differential $d \{F_1, \ldots, F_n \}$ vanishes on $D$. That is,
$\{F_1, \ldots, F_n \} \in C^\infty(M)_D$. Hence, by Theorem \ref{marsden-ratiu-improve}, the triple $(M, N, E)$ is reducible. 
\end{proof}

\begin{exam}
 Consider $M = \mathbb{R}^4$ with the Nambu structure $\Pi = \frac{\partial}{\partial x} \wedge \frac{\partial}{\partial y} \wedge \frac{\partial}{\partial z}$ of order $3$. Take
the submanifold $N = \{ z = 0 \}$ and $E = D = \theta_D|_N$, where $\theta_D = \mathbb{R} \frac{\partial}{\partial z}.$ Then $\theta_D$ is compatible with $E$, and since
$TN + D = TM|_N$,  condition (\ref{ann-subset-tn+d}) also holds. Moreover, we have
$\mathcal{L}_{\frac{\partial}{\partial z}} \Pi = 0.$ Therefore, by Proposition \ref{last-prop}, the triple $(M, N, E)$ is reducible.

Note that in this example the condition $\Pi^\sharp(Ann^1E) \subset TN$ is not satisfied, since $\Pi^\sharp(dx \wedge dy) = \frac{\partial}{\partial z} \notin TN .$
\end{exam}

The next example shows that conditions in Proposition \ref{last-prop} are not necessary in order to obtain a reduced Nambu structure.

\begin{exam}
 Consider $M = \mathbb{R}^4$ with the Nambu structure $\Pi = w \frac{\partial}{\partial x} \wedge \frac{\partial}{\partial y} \wedge \frac{\partial}{\partial z} $ of order $3$. Take the submanifold
$N = \{ w = x \}$ and $E = \mathbb{R} \frac{\partial}{\partial w}$. Then a distribution $\theta_D$ as in Proposition \ref{last-prop} does not exist.
Indeed, if $\theta_D$ exists, it has to be one dimensional because of $E \cap TN \subset \theta_D|_N \subset E$ and condition (\ref{ann-subset-tn+d}). For any vector
field $X$ which restricts to $\frac{\partial}{\partial w}$ on $N$, we have 
$(\mathcal{L}_X \Pi)(p) = X(w)(p) \frac{\partial}{\partial x} \wedge \frac{\partial}{\partial y} \wedge \frac{\partial}{\partial z}  = \frac{\partial}{\partial x} \wedge \frac{\partial}{\partial y} \wedge \frac{\partial}{\partial z}$
at the point $p \in N$, but this is not in $E \wedge \bigwedge^{n-1} TM|_N$. Thus condition (\ref{last-eqn}) is not satisfied. However Equation
(\ref{defn-m-r-bracket}) defines the Nambu structure $\{ x, y, z \} = x$ on $N$.
\end{exam}

\section{Gauge transformations and reduction}\label{sec-5}

In this section, we consider the concept of gauge transformation of Nambu-Poisson structures and show that gauge transformation commute with the reduction procedure.

\subsection{Gauge transformations}

Let $(M, \Pi)$ be a Nambu-Poisson manifold of order $n \geq 3$ and take a closed $n$-form $B \in \Omega^n(M).$ Consider the subbundle
$$ \mathcal{T}_B (L_\Pi) := \{ (\Pi^\sharp \alpha, \alpha + i_{\Pi^\sharp \alpha} B) |~ \alpha \in {\Lambda}^{n-1}T^*M \} .$$
Let $\widetilde{B} : TM \rightarrow \Lambda^{n-1}T^*M, ~ X \mapsto i_X B$ be the induced bundle map. If the bundle map
\begin{align}\label{gauge-invertible}
Id + \widetilde{B} \circ \Pi^\sharp : {\Lambda}^{n-1}T^*M \rightarrow {\Lambda}^{n-1}T^*M
\end{align}
is invertible, then
$\mathcal{T}_B (L_\Pi)$ is the graph of a map $\Pi^\sharp (Id + \widetilde{B} \circ \Pi^\sharp)^{-1} : \Lambda^{n-1}T^*M \rightarrow TM$.
Next we will show that the map $\Pi^\sharp (Id + \widetilde{B} \circ \Pi^\sharp)^{-1}$ is skew-symmetric, 
thus, given by a skew-symmetric $n$-tensor field, denoted by $\mathcal{T}_B (\Pi).$
If $\Pi (x) = 0$ for some $x \in M$ then $\Pi_x^\sharp (Id + \widetilde{B} \circ \Pi^\sharp)_x^{-1} : \Lambda^{n-1} T_x^*M \rightarrow T_xM$ is the zero map and hence skew-symmetric. If $\Pi (x) \neq 0$ then there exists a local coordinate $(U ; x^1, \ldots, x^n, x^{n+1}, \ldots, x^m)$ around $x$ such that
$$ \Pi |_U = \frac{\partial}{\partial x^1} \wedge \cdots \wedge \frac{\partial}{\partial x^n}$$
(cf. Theorem \ref{nambu-poisson-tensor-decomposable}). For any locally defined $(n-1)$-form $\alpha$ of the form $\alpha = dx^{i_1} \wedge \cdots \wedge dx^{i_{n-1}}$ with $\{i_1, \ldots, i_{n-1}\} \nsubseteq \{ 1, \ldots, n\}$, we have $\Pi^\sharp \alpha = 0$.
Therefore,
$$ (Id + \widetilde{B} \circ \Pi^\sharp)^{-1} ( dx^{i_1} \wedge \cdots \wedge dx^{i_{n-1}} ) =  dx^{i_1} \wedge \cdots \wedge dx^{i_{n-1}} ,$$
for $\{i_1, \ldots, i_{n-1}\} \nsubseteq \{ 1, \ldots, n\}$. Hence,
$$ \Pi^\sharp (Id + \widetilde{B} \circ \Pi^\sharp)^{-1} ( dx^{i_1} \wedge \cdots \wedge dx^{i_{n-1}} ) = 0 .$$
On the other hand, if $\alpha = dx^1 \wedge \cdots \wedge \widehat{dx^k} \wedge \cdots \wedge dx^n$ for some $k \in \{ 1, \ldots, n\}$, we have
$$ (Id + \widetilde{B} \circ \Pi^\sharp) (dx^1 \wedge \cdots \wedge \widehat{dx^k} \wedge \cdots \wedge dx^n) = dx^1 \wedge \cdots \wedge \widehat{dx^k} \wedge \cdots \wedge dx^n + (-1)^{n-k} i_{ \frac{\partial}{\partial x^k}    } B .$$
Suppose $B$ is locally of the form $B|_U = b ~ dx^1 \wedge \cdots \wedge dx^n + B'$ with $b \in C^\infty(U)$ and $B'$ is an $n$-form defined on
$U$ containing local expressions other than
$dx^1 \wedge \cdots \wedge dx^n$. In that case
 $i_{ \frac{\partial}{\partial x^k}} B =  (-1)^{k-1} b ~ dx^1 \wedge \cdots \wedge \widehat{dx^k} \wedge \cdots \wedge dx^n + B''$, where 
$B''$ is an $(n-1)$-form containing terms $dx^{i_1} \wedge \cdots \wedge dx^{i_{n-1}}$ with $\{i_1, \ldots, i_{n-1}\} \nsubseteq \{ 1, \ldots, n\}$. Therefore,
$$(Id + \widetilde{B} \circ \Pi^\sharp) (dx^1 \wedge \cdots \wedge \widehat{dx^k} \wedge \cdots \wedge dx^n) = [1 + (-1)^{n-1} b]~  dx^1 \wedge \cdots \wedge \widehat{dx^k} \wedge \cdots \wedge dx^n + (-1)^{n-k} B'' .$$
Since $\Pi^\sharp (B'') = 0$ we have
$$ (Id + \widetilde{B} \circ \Pi^\sharp) \bigg( \frac{ dx^1 \wedge \cdots \wedge \widehat{dx^k} \wedge \cdots \wedge dx^n  - (-1)^{n-k} B''   }{[1 + (-1)^{n-1} b]} \bigg) =  dx^1 \wedge \cdots \wedge \widehat{dx^k} \wedge \cdots \wedge dx^n ,$$
which implies that
\begin{align*}
 \Pi^\sharp (Id + \widetilde{B} \circ \Pi^\sharp)^{-1} (dx^1 \wedge \cdots \wedge \widehat{dx^k} \wedge \cdots \wedge dx^n ) &= \frac{1}{[1 + (-1)^{n-1} b]} \Pi^\sharp (dx^1 \wedge \cdots \wedge \widehat{dx^k} \wedge \cdots \wedge dx^n) \\
&= \frac{ (-1)^{n-k}}{ [1 + (-1)^{n-1} b] } \frac{ \partial}{ \partial x^k}.
\end{align*}
Therefore, in any case,
$$ \langle dx^j, \Pi^\sharp (Id + \widetilde{B} \circ \Pi^\sharp)^{-1} (dx^{i_1} \wedge \cdots \wedge dx^{i_{n-1}}) \rangle = - \langle dx^{i_{n-1}}, \Pi^\sharp (Id + \widetilde{B} \circ \Pi^\sharp)^{-1}(dx^{i_1} \wedge \cdots \wedge dx^{i_{n-2}} \wedge dx^j) \rangle$$
holds. This shows that the map $\Pi^\sharp (Id + \widetilde{B} \circ \Pi^\sharp)^{-1}$ is skew-symmetric.
The skew-symmetric $n$-tensor field $\mathcal{T}_B (\Pi) \in \Gamma(\Lambda^n TM)$ is completely determined by
$$ \mathcal{T}_B (\Pi)^\sharp = \Pi^\sharp (Id + \widetilde{B} \circ \Pi^\sharp)^{-1},$$
and, in this case,
$$ \mathcal{T}_B (L_\Pi) = Graph ~( \mathcal{T}_B (\Pi)^\sharp ) = L_{\mathcal{T}_B (\Pi)}.$$
Moreover, $\mathcal{T}_B (L_\Pi)$ is closed under the higher order Dorfman bracket, as
\begin{align*}
& \llbracket (\Pi^\sharp \alpha, \alpha + i_{\Pi^\sharp \alpha} B), (\Pi^\sharp \beta, \beta + i_{\Pi^\sharp \beta} B) \rrbracket \\
=&~ ( [\Pi^\sharp \alpha, \Pi^\sharp \beta] , \mathcal{L}_{\Pi^\sharp \alpha} \beta + \mathcal{L}_{\Pi^\sharp \alpha} i_{\Pi^\sharp \beta} B 
- i_{\Pi^\sharp \beta} d \alpha - i_{\Pi^\sharp \beta} d i_{\Pi^\sharp \alpha} B )\\
=&~ ([\Pi^\sharp \alpha, \Pi^\sharp \beta] , {\textbf \{} \alpha, \beta {\textbf \}}_\Pi + i_{[\Pi^\sharp \alpha, \Pi^\sharp \beta]} B) \qquad (\text{since }B\text{ is closed})\\
=&~ (\Pi^\sharp {\textbf \{} \alpha, \beta {\textbf \}}_\Pi , {\textbf \{} \alpha, \beta {\textbf \}}_\Pi + i_{\Pi^\sharp {\textbf \{} \alpha, \beta {\textbf \}}_\Pi } B ).
\end{align*}
Therefore, it follows from Proposition \ref{nambu-characterization-bi-sheng} that $\mathcal{T}_B (\Pi)$ is a Nambu-Poisson tensor on $M$. 
The Nambu tensor $\mathcal{T}_B (\Pi)$ is called the gauge transformation of $\Pi$ associated with the $n$-form $B$, 
and the Nambu
structures $\Pi, \mathcal{T}_B (\Pi)$ are called gauge equivalent.

\begin{remark}\label{gauge-n-p-same-fol}
 Since the map (\ref{gauge-invertible}) is an isomorphism, it follows that gauge equivalent Nambu-Poisson structures gives rise to same characteristic distribution.
\end{remark}

More generally, gauge equivalent Nambu-Poisson structures corresponds to isomorphic Leibniz algebroids.
\begin{prop}\label{gauge-n-p-same-leib}
 Let $\Pi$ be a Nambu-Poisson structure of order $n$ on $M$, and $\mathcal{T}_B (\Pi)$ be the gauge transformation of $\Pi$ associated with the $n$-form $B$.
Then the Leibniz algebroid structures on $\Lambda^{n-1}T^*M \rightarrow M$ associated to the Nambu-Poisson tensors $\Pi$ and $\mathcal{T}_B (\Pi)$ are isomorphic.
\end{prop}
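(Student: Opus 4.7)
The plan is to exhibit an explicit bundle isomorphism $\phi : \Lambda^{n-1}T^*M \to \Lambda^{n-1}T^*M$ covering the identity of $M$ and then verify that it intertwines the anchors and the Leibniz brackets of the two structures. The natural candidate, already visible in the definition of $\mathcal{T}_B(\Pi)$, is
$$\phi := \mathrm{Id} + \widetilde{B} \circ \Pi^\sharp,$$
which is invertible by the standing hypothesis \eqref{gauge-invertible} used to make sense of $\mathcal{T}_B(\Pi)$ in the first place.

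First I would check the anchor compatibility $\mathcal{T}_B(\Pi)^\sharp \circ \phi = \Pi^\sharp$. This is immediate from the defining relation $\mathcal{T}_B(\Pi)^\sharp = \Pi^\sharp \circ \phi^{-1}$. Equivalently, every element of $L_{\mathcal{T}_B(\Pi)}$ can be written uniquely as $(\Pi^\sharp \alpha, \phi(\alpha))$ for some $\alpha \in \Lambda^{n-1}T^*M$, and this identification is precisely $\phi$.

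Next I would use the computation carried out just before the statement. Applied to a pair of sections of $L_{\mathcal{T}_B(\Pi)}$, written in the form $(\Pi^\sharp \alpha, \phi(\alpha))$ and $(\Pi^\sharp \beta, \phi(\beta))$, the higher-order Dorfman bracket (whose $d B$-terms cancel because $B$ is closed) gives
$$\llbracket (\Pi^\sharp \alpha, \phi(\alpha)), (\Pi^\sharp \beta, \phi(\beta)) \rrbracket = \bigl(\Pi^\sharp \{\alpha,\beta\}_\Pi,\; \phi(\{\alpha,\beta\}_\Pi)\bigr).$$
On the other hand, by Proposition \ref{nambu-characterization-bi-sheng} together with \eqref{dorfman-brk} and \eqref{leib-brack}, the Leibniz bracket of the Nambu-Poisson manifold $(M,\mathcal{T}_B(\Pi))$ is precisely the projection to the second factor of the Dorfman bracket on $L_{\mathcal{T}_B(\Pi)}$. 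Comparing the second components thus yields
$$\{\phi(\alpha), \phi(\beta)\}_{\mathcal{T}_B(\Pi)} = \phi\bigl(\{\alpha,\beta\}_\Pi\bigr),$$
which is the required bracket compatibility.

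Combining the two, $\phi$ is a fibrewise linear isomorphism covering $\mathrm{Id}_M$ that preserves both anchors and brackets, hence is an isomorphism of Leibniz algebroids. The substantive content is essentially already contained in the Dorfman bracket computation preceding the proposition; the only genuine task here is bookkeeping, namely identifying the Leibniz bracket of a Nambu-Poisson structure with the projection of the Dorfman bracket on its graph, and I expect this identification (and its interaction with the change of parametrisation $\alpha \leftrightarrow \phi(\alpha)$ of $L_{\mathcal{T}_B(\Pi)}$) to be the only step requiring care.
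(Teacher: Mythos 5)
Your proposal is correct and follows essentially the same route as the paper: the isomorphism is $\mathrm{Id} + \widetilde{B}\circ\Pi^\sharp$, anchor compatibility is the defining relation $\mathcal{T}_B(\Pi)^\sharp = \Pi^\sharp\circ(\mathrm{Id}+\widetilde{B}\circ\Pi^\sharp)^{-1}$, and the bracket compatibility is the same cancellation of the $dB$-terms. The only cosmetic difference is that you cite the earlier Dorfman-bracket computation (via the observation that the Leibniz bracket is the second-component projection of the Dorfman bracket on the graph), whereas the paper redoes that computation directly with the formula for $\boldsymbol{\{}\,\cdot\,,\cdot\,\boldsymbol{\}}_{\mathcal{T}_B(\Pi)}$.
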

\begin{proof}
 Consider the bundle isomorphism $(Id + \widetilde{B} \circ \Pi^\sharp):\Lambda^{n-1}T^*M \rightarrow \Lambda^{n-1}T^*M$, given by  $\alpha \mapsto \alpha + i_{\Pi^\sharp \alpha} B$,
for $\alpha \in \Lambda^{n-1}T^*M$. This map commute with the corresponding anchors, as
$$ \Pi^\sharp = \mathcal{T}_B (\Pi)^\sharp \circ (Id + \widetilde{B} \circ \Pi^\sharp) .$$
For any $\alpha, \beta \in  \Lambda^{n-1}T^*M$, we also have
\begin{align*}
& {\textbf \{} (Id + \widetilde{B} \circ \Pi^\sharp) (\alpha) , (Id + \widetilde{B} \circ \Pi^\sharp) (\beta) {\textbf \}}_{\mathcal{T}_B (\Pi)}\\
=&~ \mathcal{L}_{\mathcal{T}_B (\Pi)^\sharp (Id + \widetilde{B} \circ \Pi^\sharp) \alpha} (Id + \widetilde{B} \circ \Pi^\sharp) \beta - i_{\mathcal{T}_B (\Pi)^\sharp (Id + \widetilde{B} \circ \Pi^\sharp) \beta} d ((Id + \widetilde{B} \circ \Pi^\sharp) \alpha)\\
=&~ \mathcal{L}_{\Pi^\sharp \alpha} \beta + \mathcal{L}_{\Pi^\sharp \alpha} i_{\Pi^\sharp \beta} B - i_{\Pi^\sharp \beta} d \alpha - i_{\Pi^\sharp \beta} d i_{\Pi^\sharp \alpha} B \\
=&~ {\textbf \{} \alpha, \beta {\textbf \}}_\Pi + i_{\Pi^\sharp {\textbf \{} \alpha, \beta {\textbf \}}_\Pi} B = (Id + \widetilde{B} \circ \Pi^\sharp) {\textbf \{} \alpha, \beta {\textbf \}}_\Pi .
\end{align*}
Hence the proof.
\end{proof}

\subsection{Gauge transformation commutes with reduction}
\begin{thm}\label{gauge-red=red-gauge}
 Let $(M, \{, \ldots,\})$ be a Nambu-Poisson manifold with associated Nambu-Poisson tensor $\Pi$, $N \subset M$ a submanifold and $i : N \hookrightarrow M$ the inclusion. Let $E \subset TM|_N$ be a subbundle
(may not be canonical) such that $F := E \cap TN$ is a regular, integrable distribution on $N$ and that 
\begin{itemize}
 \item[(i)] if $F_1, \ldots, F_n \in C^\infty(M)_E$ are smooth functions on $M$, then
$$\{F_1 ,\ldots, F_n \} \in C^\infty(M)_F .$$
 \item[(ii)] Moreover,
$$ \Pi^{\sharp} (Ann^1 E) \subseteq TN.$$
\end{itemize}
Let $B$ be a closed $n$-form on $M$ such that the map defined in (\ref{gauge-invertible}) is invertible and that
\begin{itemize}
 \item[a)] $\widetilde{B} : TM \rightarrow \Lambda^{n-1}T^*M$ maps $\widetilde{B} (TN) \subset Ann^{1} E,$
 \item[b)] $B$ projects to an $n$-form $\underline{B} \in \Omega^n(\underline{N})$ on $\underline{N}$.
\end{itemize}
Consider the gauge transformation of $\Pi$ associated with the $n$-form $B$, and let the Nambu structure be denoted by $\mathcal{T}_B(\Pi).$ Then $(M, \mathcal{T}_B(\Pi) )$ reduces to a Nambu-Poisson
manifold $(\underline{N}, \underline{ \mathcal{T}_B(\Pi) })$ which is same as the gauge transformation
of the reduced Nambu-Poisson manifold $(\underline{N}, \underline{\Pi})$ associated with the $n$-form $\underline{B}$, that is, the following diagram
\begin{align*}
\xymatrixrowsep{0.5in}
\xymatrixcolsep{0.7in}
\xymatrix{
(M, \Pi) \ar[r]^{\mathcal{T}_B} \ar[d]_{\pi} & (M, \mathcal{T}_B (\Pi)) \ar[d]^{\underline{\pi}} \\
(\underline{N}, \underline{\Pi}) \ar[r]_{\mathcal{T}_{\underline{B}}} & (\underline{N} , \underline{\mathcal{T}_B (\Pi)} = \mathcal{T}_{\underline{B}} (\underline{\Pi}))
}
\end{align*}
is commutative.
\end{thm}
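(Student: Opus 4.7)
The plan is to apply Theorem~\ref{mr-improve-thm} to $(M,\mathcal{T}_B(\Pi),N,E)$ in order to obtain a reduced Nambu structure $\underline{\mathcal{T}_B(\Pi)}$ on $\underline{N}$, and then to identify it with $\mathcal{T}_{\underline{B}}(\underline{\Pi})$. I would carry both tasks out in parallel: a single direct calculation will show that $i^*\{F_1,\ldots,F_n\}_{\mathcal{T}_B(\Pi)}$ is $\pi^*$-basic (which yields the outstanding condition (i) of Theorem~\ref{mr-improve-thm}) and simultaneously that it equals $\pi^*\{f_1,\ldots,f_n\}_{\mathcal{T}_{\underline{B}}(\underline{\Pi})}$ (which yields the commutativity).

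For condition (ii) applied to $\mathcal{T}_B(\Pi)$, write $T:=\Id+\widetilde{B}\circ\Pi^\sharp$, so that $\mathcal{T}_B(\Pi)^\sharp=\Pi^\sharp\circ T^{-1}$. The key observation is that $T$ restricts fiberwise along $N$ to an automorphism of $Ann^1 E$: for $\beta\in(Ann^1 E)_p$ with $p\in N$, the original condition (ii) gives $\Pi^\sharp\beta\in T_pN$, and hypothesis (a) then gives $\widetilde{B}(\Pi^\sharp\beta)=i_{\Pi^\sharp\beta}B\in(Ann^1 E)_p$, so $T\beta\in(Ann^1 E)_p$. Since $T$ is a fiberwise linear isomorphism of $\Lambda^{n-1}T^*M$, its inverse also preserves $Ann^1 E$ along $N$, hence
\begin{equation*}
 \mathcal{T}_B(\Pi)^\sharp(Ann^1 E)\;\subseteq\;\Pi^\sharp(Ann^1 E)\;\subseteq\;TN,
\end{equation*}
which is condition (ii) of Theorem~\ref{mr-improve-thm} for $\mathcal{T}_B(\Pi)$.

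For condition (i) and the commutativity, take $f_1,\ldots,f_n\in C^\infty(\underline{N})$ and extensions $F_i\in C^\infty(M)_E$ of $f_i\circ\pi$, and set $\alpha:=dF_1\wedge\cdots\wedge dF_{n-1}$, $\beta^M:=T^{-1}\alpha$, $\underline{\alpha}:=df_1\wedge\cdots\wedge df_{n-1}$ and $\underline{\beta}:=(\Id+\underline{\widetilde{B}}\circ\underline{\Pi}^\sharp)^{-1}\underline{\alpha}$. Applying $i^*$ to the defining identity $\alpha=\beta^M+i_{\Pi^\sharp\beta^M}B$, using $i^*B=\pi^*\underline{B}$ from hypothesis (b), and using the projectability claim $d\pi(\Pi^\sharp\beta^M|_N)=\underline{\Pi}^\sharp\underline{\beta}$ (see next paragraph), one identifies $i^*\beta^M=\pi^*\underline{\beta}$, and then
\begin{equation*}
 i^*\{F_1,\ldots,F_n\}_{\mathcal{T}_B(\Pi)}\;=\;\langle i^*dF_n,\,\Pi^\sharp\beta^M|_N\rangle\;=\;\pi^*\langle df_n,\,\underline{\Pi}^\sharp\underline{\beta}\rangle\;=\;\pi^*\{f_1,\ldots,f_n\}_{\mathcal{T}_{\underline{B}}(\underline{\Pi})}.
\end{equation*}
The right-hand side lies in $\pi^*C^\infty(\underline{N})\subseteq C^\infty(N)_F$, which verifies condition (i), and the equation as a whole identifies the reduced bracket with $\mathcal{T}_{\underline{B}}(\underline{\Pi})$.

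The principal obstacle is the projectability claim, and more generally the assertion that for any $\gamma\in\Omega^{n-1}(M)$ with $\gamma|_N\in Ann^1 E$ and $i^*\gamma=\pi^*\underline{\gamma}$ basic, the vector field $\Pi^\sharp\gamma|_N$ is $\pi$-related to $\underline{\Pi}^\sharp\underline{\gamma}$. For decomposable generators $\gamma=dG_1\wedge\cdots\wedge dG_{n-1}$ with $G_i\in C^\infty(M)_E$ this is exactly the explicit description of $(\Pi_{\underline{N}})^\sharp$ in the corollary following Theorem~\ref{mr-improve-thm}; the general case is reduced to the decomposable case by a local expansion of $\gamma$ together with $C^\infty(N)_F$-linearity modulo $F=\ker d\pi|_N$. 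Its self-referential use at $\gamma=\beta^M$ is consistent because the formal iteration $\alpha_0:=\alpha$, $\alpha_{k+1}:=\alpha-i_{\Pi^\sharp\alpha_k}B$ converging (fiberwise, since $T$ is invertible) to $\beta^M$ preserves both $Ann^1 E|_N$-valuedness and basicness of the pullback at each step, by precisely the arguments used in the second and third paragraphs.
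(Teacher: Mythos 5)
Your overall strategy coincides with the paper's: you verify the hypotheses of Theorem~\ref{mr-improve-thm} for $\mathcal{T}_B(\Pi)$ by showing that $T:=\Id+\widetilde{B}\circ\Pi^\sharp$ restricts to an automorphism of $Ann^1E$ along $N$ (this part is identical to the paper and correct), and you then identify the reduced bracket by relating $T^{-1}$ upstairs to $(\Id+\widetilde{\underline{B}}\circ\underline{\Pi}^\sharp)^{-1}$ downstairs. The difference lies entirely in how the identification step is justified, and that is where your argument has a genuine gap.

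The gap is the final sentence: you break the apparent circularity in the projectability claim by asserting that the fixed-point iteration $\alpha_0=\alpha$, $\alpha_{k+1}=\alpha-i_{\Pi^\sharp\alpha_k}B$ converges to $\beta^M=T^{-1}\alpha$ ``fiberwise, since $T$ is invertible.'' Invertibility of $\Id+A$ does not imply convergence of the Neumann-type iteration $x_{k+1}=b-Ax_k$; that requires the spectral radius of $A=\widetilde{B}\circ\Pi^\sharp$ to be less than $1$. In the present setting this genuinely fails: in a Darboux chart where $\Pi=\partial_1\wedge\cdots\wedge\partial_n$ and $B=b\,dx^1\wedge\cdots\wedge dx^n+B'$, one computes $(\widetilde{B}\circ\Pi^\sharp)^2=(-1)^{n-1}b\,(\widetilde{B}\circ\Pi^\sharp)$, so the iteration diverges whenever $|b|\geq 1$, while the invertibility hypothesis only excludes $1+(-1)^{n-1}b=0$. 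Since the iteration is your sole mechanism for proving that $\beta^M|_N$ lies in $Ann^1E$ with basic pullback and that $\Pi^\sharp\beta^M|_N$ is $\pi$-related to $\underline{\Pi}^\sharp\underline{\beta}$, the argument as written does not go through. The conclusion itself is salvageable without any iteration: the first of these facts follows directly from $T^{-1}(Ann^1E)=Ann^1E$ (already established) by expanding $\beta^M$ locally as $\sum H_{j_1\ldots j_{n-1}}\,dH_{j_1}\wedge\cdots\wedge dH_{j_{n-1}}$ with $dH_j|_E=0$ --- which is exactly what the paper does, pushing this expansion down to $\underline{N}$ and comparing brackets termwise via the already-known reduction identity for $\underline{\Pi}$ --- and the identification $\underline{\gamma}=\underline{\beta}$ then follows by applying $\underline{T}$ and using injectivity of $\pi^*$, rather than by iterating. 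I recommend you replace the convergence argument by one of these routes.
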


\begin{proof}
First we will show that the Nambu-Poisson tensor $\mathcal{T}_B (\Pi)$ satisfies the conditions of Theorem \ref{mr-improve-thm}. Since
$\Pi^\sharp (Ann^1 E) \subset TN$ and $\widetilde{B} (TN) \subset Ann^1E$, it follows that, 
$$(Id + \widetilde{B} \circ \Pi^\sharp) : \Lambda^{n-1}T^*M \rightarrow \Lambda^{n-1}T^*M$$ maps 
$(Id + \widetilde{B} \circ \Pi^\sharp)(Ann^1E) \subset Ann^1E.$ As $(Id + \widetilde{B} \circ \Pi^\sharp)$ is an isomorphism,
$(Id + \widetilde{B} \circ \Pi^\sharp)(Ann^1E) = Ann^1E.$ Therefore, for any $\eta \in Ann^1 E$,
$$ \mathcal{T}_B (\Pi)^\sharp (\eta) = \Pi^\sharp (Id + \widetilde{B} \circ \Pi^\sharp)^{-1} (\eta) \in \Pi^\sharp (Ann^1E) \subseteq TN.$$
Moreover, for any $F_1, \ldots, F_n \in C^\infty(M)_E$, we have
\begin{align*}
 \{F_1, \ldots, F_n \}_{\mathcal{T}_B (\Pi)} =&~ \langle \mathcal{T}_B (\Pi)^\sharp (dF_1 \wedge \cdots \wedge dF_{n-1}) , dF_n \rangle \\
=&~ \langle  \Pi^\sharp (Id + \widetilde{B} \circ \Pi^\sharp)^{-1}  (dF_1 \wedge \cdots \wedge dF_{n-1}) , dF_n \rangle .
\end{align*}
Therefore, for any $V \in \Gamma (TM)$ with $V|_N \in F$, we have
\begin{align*}
 &\langle d (\{F_1, \ldots, F_n \}_{\mathcal{T}_B (\Pi)} ) , V \rangle |_N\\ =&~ ( \mathcal{L}_{\Pi^\sharp (Id + \widetilde{B} \circ \Pi^\sharp)^{-1}  (dF_1 \wedge \cdots \wedge dF_{n-1})} dF_n )(V)|_N \\
=&~ \big[ \mathcal{L}_{\Pi^\sharp (Id + \widetilde{B} \circ \Pi^\sharp)^{-1}  (dF_1 \wedge \cdots \wedge dF_{n-1})} V (F_n) - dF_n ([\Pi^\sharp (Id + \widetilde{B} \circ \Pi^\sharp)^{-1}  (dF_1 \wedge \cdots \wedge dF_{n-1}) , V]) \big]|_N = 0.
\end{align*}
This shows that, $\{ F_1, \ldots, F_n \}_{\mathcal{T}_B (\Pi)} \in C^\infty(M)_F.$ Thus, the Nambu structure $\mathcal{T}_B (\Pi)$ is reducible by Theorem
\ref{mr-improve-thm}.

Moreover, since $B$ is closed, the $n$-form $\underline{B} \in \Omega^n(\underline{N})$ on $\underline{N}$ is closed. The bundle map
$$ Id + \widetilde{\underline{B}} \circ \underline{\Pi}^\sharp : \Lambda^{n-1} T^*\underline{N} \rightarrow \Lambda^{n-1} T^*\underline{N}$$
is invertible and the inverse is given as follows. For any $f_1, \ldots, f_n \in C^\infty(\underline{N})$, let 
$F_1, \ldots, F_n \in C^\infty(M)_E$ be arbitrary extensions of $f_1 \circ \pi, \ldots, f_n \circ \pi$
fron $N$ to $M$. If the inverse $(Id + \widetilde{B} \circ \Pi^\sharp)^{-1}(dF_1 \wedge \cdots \wedge dF_{n-1})$ is locally given by the sum
$\sum_{j_1, \ldots, j_{n-1}}  H_{j_1 \ldots j_{n-1}} dH_{j_1} \wedge \cdots \wedge dH_{j_{n-1}}$, for some locally defined functions $H_j$'s on $M$ with differentials vanishing on $E$, then the inverse
$(Id + \widetilde{\underline{B}} \circ \underline{\Pi}^\sharp)^{-1} (df_1 \wedge \cdots \wedge df_{n-1})$ is locally given by by the sum
$\sum_{j_1, \ldots, j_{n-1}}  h_{j_1 \ldots j_{n-1}} dh_{j_1} \wedge \cdots \wedge dh_{j_{n-1}}$, where $h_j$'s are restriction of $H_j$'s on $N$.
 
From the reducibility of Nambu structures $\Pi$ and $\mathcal{T}_B (\Pi)$, we have
\begin{align}
\{ f_1, \ldots, f_n \}_{\underline{\Pi}} \circ \pi &= \{F_1, \ldots, F_n \} \circ i, \label{eqn1}\\
 \{ f_1, \ldots, f_n \}_{\underline{\mathcal{T}_B \Pi}} \circ \pi &= \{F_1, \ldots, F_n \}_{\mathcal{T}_B \Pi} \circ i . \label{eqn2}
\end{align}

Therefore, for any $x \in N$,
\begin{align*}
&\{ f_1, \ldots, f_n \}_{\mathcal{T}_{\underline{B}} (\underline{\Pi})} (\pi(x))\\
=&~ \Big\langle  \mathcal{T}_{\underline{B}} (\underline{\Pi})^\sharp (df_1 \wedge \cdots \wedge df_{n-1}) , df_n \Big\rangle (\pi(x)) \\
=&~ \Big\langle \underline{\Pi}^\sharp (Id + \widetilde{\underline{B}} \circ \underline{\Pi}^\sharp)^{-1} (df_1 \wedge \cdots \wedge df_{n-1}) , df_n \Big\rangle (\pi(x)) \\
=&~ \sum_{j_1, \ldots, j_{n-1}} \langle h_{j_1 \ldots j_{n-1}} \underline{\Pi}^\sharp (dh_{j_1} \wedge \cdots \wedge dh_{j_{n-1}}), df_n \rangle (\pi(x)) \\
=&~ \sum_{j_1, \ldots, j_{n-1}} ( h_{j_1 \ldots j_{n-1}} \{ h_{j_1}, \ldots, h_{j_{n-1}}, f_n \}_{\underline{\Pi}}) (\pi(x))\\
=&~ \sum_{j_1, \ldots, j_{n-1}} ( H_{j_1 \ldots j_{n-1}} \{ H_{j_1}, \ldots, H_{j_{n-1}}, F_n \}) (i(x)) \qquad (\text{by Equation } (\ref{eqn1}))\\
=&~ \sum_{j_1, \ldots, j_{n-1}} \langle H_{j_1 \ldots j_{n-1}} \Pi^\sharp (dH_{j_1} \wedge \cdots \wedge dH_{j_{n-1}}), dF_n \rangle (i(x)) \\
=&~ \Big \langle \Pi^\sharp (Id + \widetilde{B} \circ \Pi^\sharp)^{-1}(dF_1 \wedge \cdots \wedge dF_{n-1}), dF_n \Big \rangle (i(x))\\
=&~ \Big\langle \mathcal{T}_B (\Pi)^\sharp (dF_1 \wedge \cdots \wedge dF_{n-1}), dF_n \Big \rangle (i(x)) = \{F_1, \ldots, F_{n-1}, F_n \}_{\mathcal{T}_B (\Pi)} (i(x)).
\end{align*}
Thus by Equation (\ref{eqn2}), it follows that
$$\{f_1, \ldots, f_n \}_{\underline{\mathcal{T}_B \Pi}} \circ \pi = \{ f_1, \ldots, f_n \}_{\mathcal{T}_{\underline{B}} (\underline{\Pi})} \circ \pi.$$
Since $\pi$ is surjective, we have $\underline{\mathcal{T}_B \Pi} = \mathcal{T}_{\underline{B}} (\underline{\Pi}).$ Hence the proof.\\
\end{proof}

\noindent {\bf Acknowledgment.}
The author wish to thank Prof. Goutam Mukherjee for his carefully reading the manuscript.
The author would also like to thank the referee for his comments and suggestions on the earlier version of the paper that have improved the exposition.



\end{document}